\numberwithin{equation}{section}
\newtheorem{theorem}{Theorem}[section]
\newtheorem{lemma}{Lemma}[section]
\newcommand{\R}{\mathbb {R}}
\date{}
\begin{document}

    \title{\bf A two-strain reaction-diffusion malaria model with seasonality and vector-bias\thanks{  This research is supported by the NSF of China (No. 11971369) and the Fundamental Research Funds for the Central Universities (No. JB210711).}}

\author{Huijie Chu,~~ Zhenguo Bai\\
    School of Mathematics and Statistics, Xidian
University, Xi'an, 710126, China\\
    E-mail:~~~zgbai@xidian.edu.cn}

\maketitle

\begin{abstract}
To investigate the combined effects of drug resistance, seasonality and vector-bias, we formulate a periodic two-strain reaction-diffusion model. It is a competitive system for
resistant and sensitive strains, but the single-strain subsystem is cooperative.
We derive the basic reproduction number $\mathcal {R}_i$ and the invasion reproduction number $\mathcal {\hat{R}}_i$ for strain $i~(i=1,2)$, and establish the transmission dynamics in terms of these four quantities. More precisely, (i) if $\mathcal {R}_1<1$ and $\mathcal{R}_2<1$, then the disease is extinct; (ii) if $\mathcal {R}_1>1>\mathcal{R}_2$ ($\mathcal {R}_2>1>\mathcal{R}_1$), then the sensitive (resistant) strains are persistent, while the resistant (sensitive) strains die out; (iii) if $\mathcal {R}_i>1$ and $\mathcal {\hat{R}}_i>1~(i=1,2)$, then two strains are coexistent and periodic oscillation phenomenon is observed. We also study the asymptotic behavior of the basic reproduction number with respect to small and large diffusion coefficients. Numerically, we demonstrate the phenomena of coexistence and competitive exclusion for two strains and explore the influences of seasonality and vector-bias on disease spreading.
\end{abstract}

{\bf Key words:} Malaria model; Seasonality; Vector-bias; Two strains; Reproduction numbers.

\smallskip

{\bf AMS Subject Classification: }   92D30, 37N25, 34K13

\section{Introduction}
\label{Intro}
Malaria, one of the most common vector-borne diseases, is endemic in over 100 countries worldwide and causes serious public health problems and a significant economic burden worldwide \cite{JuanB2015}. Human malaria infection is caused by the genus Plasmodium parasite, which can be transmitted to humans by the effective bites of adult female Anopheles mosquitoes (after taking a blood meal from humans) \cite{Forouzannia and Gumel 2014}. According to the 2020 WHO report \cite{WHO}, the global tally of malaria cases was 229 million in 2019, claiming some 409 000 lives compared to 411 000 in 2018. Therefore, a deep understanding of malaria transmission mechanisms will undoubtedly contribute to disease control.

Mathematical models have been proposed to study the
dynamics of malaria outbreaks in different parts of the world, the earliest model dates back to the Ross-Macdonald model \cite{Ross 1911,Macdonald1957}. Since then,
various mathematical models have been designed to describe and predict the spreading of malaria (see, e.g., \cite{Cosner09, Chamchod and Britton 2011, Lou and Zhao 2010,LouY2011, Xiao and Zou 2014, Wang and Zhao 2017, Bai et al. 2018, WuZhao2019,Wangbg2020}). However, few studies consider the following three biological factors for malaria transmission simultaneously.

{\it Vector-bias effect}. The vector-bias describes that mosquitoes prefer biting infectious humans to susceptible ones. Kingsolver \cite{Kingsolver 1987} first introduced a vector-bias model for the dynamics of malarial transmission. Following Kingsolver's work, Hosack et al. \cite{Hosack2008} included the incubation time in mosquitoes to study the dynamics of the disease concerning the reproduction number. Further, Chamchod and Britton \cite{Chamchod and Britton 2011} extended the model from previous authors by defining the attractiveness in a different way. Motivated by these works, Wang and Zhao incorporated the seasonality into a vector-bias model with incubation period \cite{Wang and Zhao 2017}. Bai et al. formulated a time-delayed periodic reaction-diffusion model with vector-bias effect \cite{Bai et al. 2018} and found that the ignorance of the vector-bias effect will underestimate the infection risk.
All these results show that the vector-bias has an important impact on the epidemiology of malaria.

{\it Drug-resistance}. Currently, due to the lack of effective and safe vaccine, the main strategy in
controlling malaria is drugs. However, the use of anti-malarial drugs such as chloroquine, malaraquine, nivaquine, aralen and fansidar results in the appearance and spread of resistance in the parasite population \cite{Forouzannia and Gumel 2014,Aneke 2002,Klein 2013}. This poses a significant challenge to the global control of malaria transmission or eradication of the disease.
Therefore, it is essential to investigate the resistance in malaria transmission.

{\it Seasonality}. It is generally believed that climatic factors such as temperature, rainfall, humidity, wind, and duration of daylight greatly influence the transmission and distribution of vector-borne diseases \cite{Agusto et al. 2015,Cailly 2012,Hoshen and Morse 2004}. For example, rising temperatures will reduce the number of days required for breeding, and thereby increase mosquito development rates \cite{Ewing et al. 2016}. There have been some mathematical models and field observations suggesting that the strength and mechanisms of seasonality can change the pattern of infectious diseases \cite{Ewing et al. 2016,Altizer et al. 2010}. These results are beneficial for forecasting the mosquito abundance and further effectively controlling the disease.

Except these considerations above, human and vector populations have also contributed to the spread of vector-borne diseases \cite{Cosner09,LouY2011}. Therefore,
this paper will investigate a periodic two-strain malaria model with diffusion, which is an extension of autonomous limiting system in \cite{Shi and Zhao 2021}.
In view of the intrinsic mathematical structure of the model, we choose a time-varying phase space to carry out dynamical analysis. This idea has also been used in \cite{Li and Zhao 2021}.
In particular, we prove that no subset forms a cycle on the boundary with the aim of using uniform persistence theory. Its proof is nontrivial (see Theorem \ref{uniformly persistent}).

The rest of this paper is organized as follows. In the next section, we formulate the model and
study its well-posedness. In Section 3, we define the basic reproduction number $\mathcal {R}_i$ and the invasion reproduction number $\mathcal {\hat{R}}_i~(i=1,2)$ for the sensitive and resistant strains, respectively.
In Section 4, we investigate the uniform persistence and extinction in terms of the reproduction numbers.
In Section 5, we analyze the asymptotic behavior of the basic reproduction number concerning small and large diffusion coefficients. In Section 6, we conduct numerical
study for our model. And the paper ends with a brief discussion.

\section{Model formulation}

Motivated by \cite{Bai et al. 2018,Shi and Zhao 2021}, we consider the model with no immunity; that is, individuals who recovered from malaria cannot resist reinfection of the disease and can become susceptible directly. We assume that no susceptible individual or mosquito can be infected by two virus strains. The total human population $N_h(t,x)$ is divided into three groups: susceptible $S_h(t,x)$, infected individuals with drug sensitive strain $I_1(t,x)$ and infected individuals with drug resistant strain $I_2(t,x)$. For the vector population, only adult female mosquitoes can contract the virus due to adult males and immature mosquitoes do not take blood. Thereby, we consider only adult female mosquitoes in our model. The vector population $M(t,x)$ has the epidemiological classes denoted by $S_v(t,x)$, $I_{v1}(t,x)$ and $I_{v2}(t,x)$ for the susceptible, infected with sensitive and resistant strains, respectively.

Assume that all populations remain confined to a bounded domain $\Omega\subset \R^m (m\geq 1)$ with smooth boundary $\partial \Omega$ (when $m\geq 1$). Following the line in \cite{Bai et al. 2018},
we suppose that the density of total human population $N_h(t,x)=S_h(t,x)+I_1(t,x)+I_2(t,x)$ satisfies the following reaction-diffusion equation:
\begin{equation}
\label{total density of human}
\left\{
\begin{split}
\frac{\partial N_h(t,x)}{\partial t} & =D_h\Delta N_h(t,x)+B(x,N_h)N_h(t,x)-dN_h(t,x),~~~&& t>0,~x \in \Omega,\\
\frac{\partial N_h(t,x)}{\partial\nu} &=0,~~~&& t>0,~x \in \partial\Omega,
\end{split}
\right.
\end{equation}
with
\begin{equation*}
\begin{array}{l}  B(x,u)=\left\{\begin{array}{ll}
b\left[ 1-\frac{u}{K(x)}\right], & 0\leq u\leq K(x), ~x\in\bar{\Omega},\\
0,& u>K(x), ~x\in\bar{\Omega},
\end{array}\right.
\end{array}
\end{equation*}
where $\Delta$ is the usual Laplacian operator. $D_h>0$ is the diffusion coefficient of humans, $b$ and $d ~(0<d<b)$ are respectively the maximal birth rate and the nature mortality rate of humans,  and $K(x)$ denotes the local carrying capacity, which is supposed to be
a positive continuous function of location $x$. By employing \cite[Theorems 3.1.5 and 3.1.6]{Zhao 2017b}, we arrive at that system \eqref{total density of human}
admits a globally attractive positive steady state $N(x)$ in $C(\bar{\Omega},
\R_+)\setminus \{0\}$.

We also assume that the equation of the total mosquito population $M(t,x)=S_v(t,x)+I_{v1}(t,x)+I_{v2}(t,x)$ is of the form:
\begin{equation}
\label{total density of female adult mosquitoes}
\left\{
\begin{split}
\frac{\partial M(t,x)}{\partial t} &=D_v\Delta M(t,x)+\Lambda(t,x)-\eta(t,x)M(t,x),~~~&& t>0,~x \in \Omega,\\
\frac{\partial M(t,x)}{\partial \nu} &=0,~~~&& t>0,~x\in\partial\Omega,
\end{split}
\right.
\end{equation}
where $D_v>0$ is the diffusion coefficient of mosquitoes, $\Lambda(t,x)$ is the recruitment rate at which adult female mosquitoes emerge from larval at time $t$ and location $x$, and $\eta(t,x)$ is the natural death rate of mosquitoes at time $t$ and location $x$. Functions $\Lambda(t,x)$ and $\eta(t,x)$ are
H\"{o}lder continuous and nonnegative nontrivial on $\mathbb{R}\times\bar{\Omega}$, and $\omega$-periodic in $t$ for some $\omega>0$. It easily follows that system \eqref{total density of female adult mosquitoes} admits a globally stable positive $\omega$-periodic solution $M^*(t,x)$ in $C(\bar{\Omega},
\R_+)$ (see, e.g., \cite[Lemma 2.1]{Zhang et al. 2015}). Biologically, we may suppose that the total human and mosquito density at time $t$ and location $x$ respectively stabilize at $N(x)$ and $M^*(t,x)$, that is, $N_h(t,x)\equiv N(x)$ and $M(t,x)\equiv M^*(t,x)$ for all $t\geq 0$ and $x \in \Omega$.

For model parameters, since the impact of climate change on mosquitoes activities is much more than that on humans, the parameters corresponding to mosquitoes are assumed to be time-dependent.
To incorporate a vector-bias term into the model, we use the parameters $p$ and $l$ to describe the probabilities that a mosquito arrives at a human at random and picks the human if he is infectious and susceptible, respectively \cite{Chamchod and Britton 2011, Wang and Zhao 2017}. Since infectious humans are more attractive to mosquitoes, we assume $p\geq l>0$. Let $\beta(t,x)$ be the biting rate of mosquitoes at time $t$ and location $x$; $c_1$($\alpha_1$) be the transmission probability per bite from infectious mosquitoes (humans) with sensitive strain to susceptible humans (mosquitoes), and $c_2$($\alpha_2$) be the transmission probability per bite from infectious mosquitoes (humans) with resistant strain to susceptible humans (mosquitoes). According to the induction in \cite{Shi and Zhao 2021}, we obtain
\begin{align*}
J_1(I_1(t,x),I_2(t,x)):=&\frac{c_1\beta(t,x)l(N(x)-I_1(t,x)-I_2(t,x))}{p(I_1(t,x)+I_2(t,x))+l(N(x)-I_1(t,x)-I_2(t,x))},\\
J_2(I_1(t,x),I_2(t,x)):=&\frac{\alpha_1\beta(t,x)pI_1(t,x)}{p(I_1(t,x)+I_2(t,x))+l(N(x)-I_1(t,x)-I_2(t,x))},\\
J_3(I_1(t,x),I_2(t,x)):=&\frac{c_2\beta(t,x)l(N(x)-I_1(t,x)-I_2(t,x))}{p(I_1(t,x)+I_2(t,x))+l(N(x)-I_1(t,x)-I_2(t,x))},\\
J_4(I_1(t,x),I_2(t,x)):=&\frac{\alpha_2\beta(t,x)pI_2(t,x)}{p(I_1(t,x)+I_2(t,x))+l(N(x)-I_1(t,x)-I_2(t,x))},
\end{align*}
where $J_1(J_3)$ represents the number of newly infectious humans with sensitive (resistant) strain caused by an infected mosquito with sensitive (resistant) strain per unit time at time $t$ and location $x$; and $J_2$($J_4$) means the force of infection on mosquitoes
due to the contact with infectious humans with sensitive (resistant) strain.

Taking into account all of these assumptions, we obtain the following periodic reaction-diffusion model:
\begin{equation}
\label{model}
\left\{
\footnotesize
\begin{split}
&\frac{\partial I_1(t,x)}{\partial t}=D_h\Delta I_1(t,x)-(d+\gamma_1)I_1(t,x)+J_1(I_1(t,x),I_2(t,x))I_{v1}(t,x),~&&t>0,~x \in \Omega,\\
&\frac{\partial I_{v1}(t,x)}{\partial t}=D_v\Delta I_{v1}(t,x)-\eta(t,x)I_{v1}(t,x)\\
&~~~~~~~~~~~~~~~~~~~~~~~+J_2(I_1(t,x),I_2(t,x))(M^*(t,x)-I_{v1}(t,x)-I_{v2}(t,x)),~&&t>0,~x \in \Omega,\\
&\frac{\partial I_2(t,x)}{\partial t}=D_h\Delta I_2(t,x)-(d+\gamma_2)I_2(t,x)+J_3(I_1(t,x),I_2(t,x))I_{v2}(t,x),~&&t>0,~x \in \Omega,\\
&\frac{\partial I_{v2}(t,x)}{\partial t}=D_v\Delta I_{v2}(t,x)-\eta(t,x)I_{v2}(t,x)\\
&~~~~~~~~~~~~~~~~~~~~~~~+J_4(I_1(t,x),I_2(t,x))(M^*(t,x)-I_{v1}(t,x)-I_{v2}(t,x)),~&&t>0,~x \in \Omega,\\
&\frac{\partial I_1(t,x)}{\partial \nu}=\frac{\partial I_{v1}(t,x)}{\partial \nu}=\frac{\partial I_2(t,x)}{\partial \nu}=\frac{\partial I_{v2}(t,x)}{\partial \nu}=0,~&&t>0,~x\in\partial\Omega,\\
&I_1(0,x)=I_1^0(x)\geq 0,~~I_{v1}(0,x)=I_{v1}^0(x)\geq 0,\\
&I_2(0,x)=I_2^0(x)\geq 0,~~I_{v2}(0,x)=I_{v2}^0(x)\geq 0,~&&x \in\bar{\Omega}.
\end{split}
\right.
\end{equation}
Here, the positive constants $\gamma_1$ and $\gamma_2$ denote the recovery rate of the sensitive and resistant strains for humans, respectively. The function $\beta(t,x)$ is H\"{o}lder continuous and nonnegative but not zero identically on $\mathbb{R}\times\bar{\Omega}$, and $\omega$-periodic in $t$. Other parameters are the same as above.

Let $\mathbb{X}:=C(\bar{\Omega},\mathbb{R}^4)$ be the Banach space with supremum norm $\|\cdot \|$ and $\mathbb{X}^+:=C(\bar{\Omega},\mathbb{R}_+^4)$. For each $t\geq 0$, we define
\begin{equation*}
\begin{split}
X(t):=\{\varphi=(\varphi_1, \varphi_2, \varphi_3, \varphi_4)\in\mathbb{X}^+:~&0\leq\varphi_1(x)+\varphi_3(x)\leq N(x),\\
&0\leq\varphi_2(x)+\varphi_4(x)\leq M^*(t,x),~\forall x\in\bar{\Omega}\}.
\end{split}
\end{equation*}
Let $\mathbb{Y}:=C(\bar{\Omega},\mathbb{R})$ and $\mathbb{Y}^+:=C(\bar{\Omega},\mathbb{R}_+)$. Let
$T_1(t,s),~T_2(t,s),~T_3(t,s): \mathbb{Y}\rightarrow\mathbb{Y}, t\geq s$, be the linear evolution operators associated with
\begin{equation*}
\label{linear evolution operators}
\begin{split}
\frac{\partial v_1(t,x)}{\partial t}&=D_h\Delta v_1(t,x)-(d+\gamma_1)v_1(t,x):=A_1v_1(t,x),\\
\frac{\partial v_2(t,x)}{\partial t}&=D_h\Delta v_2(t,x)-(d+\gamma_2)v_2(t,x):=A_2v_2(t,x),\\
\frac{\partial v_3(t,x)}{\partial t}&=D_v\Delta v_3(t,x)-\eta(t,x)v_3(t,x):=A_3v_3(t,x),
\end{split}
\end{equation*}
subject to the Neumann boundary condition, respectively. Noting that $T_j(t,s)=T_j(t-s), j=1,2$, we have $T_j(t+\omega,s+\omega)=T_j(t,s)$ for $(t,s)\in \mathbb{R}^2$ with $t\geq s, j=1,2$. Since $\eta(t,x)$ is $\omega$-periodic in $t$, \cite[Lemma 6.1]{Daners and Koch 1992} implies that $T_3(t+\omega,s+\omega)=T_3(t,s)$ for $(t,s)\in \mathbb{R}^2$ with $t\geq s$. Moreover, for $(t,s)\in\mathbb{R}^2$ with $t>s$, $T_j(t,s), ~j=1,2,3$, are compact and strongly positive.
Set $T(t,s)={\rm diag}\{T_1(t,s), T_3(t,s), T_2(t,s), T_3(t,s)\}$ and $A(t)={\rm diag}\{A_1, A_3(t), A_2, A_3(t)\}$. Define $F=(F_1, F_2, F_3, F_4): [0,\infty)\times \mathbb{X}^+\rightarrow\mathbb{X}$ by
\begin{align*}
F_1(t,\varphi)&=\frac{c_1\beta(t,\cdot)l(N(\cdot)-\varphi_1(\cdot)-\varphi_3(\cdot))}{p(\varphi_1(\cdot)+\varphi_3(\cdot))+l(N(\cdot)-\varphi_1(\cdot)-\varphi_3(\cdot))}\varphi_2(\cdot),\\
F_2(t,\varphi)&=\frac{\alpha_1\beta(t,\cdot)p\varphi_1(\cdot)}{p(\varphi_1(\cdot)+\varphi_3(\cdot))+l(N(\cdot)-\varphi_1(\cdot)-\varphi_3(\cdot))}(M^*(t,\cdot)-\varphi_2(\cdot)-\varphi_4(\cdot)),\\
F_3(t,\varphi)&=\frac{c_2\beta(t,\cdot)l(N(\cdot)-\varphi_1(\cdot)-\varphi_3(\cdot))}{p(\varphi_1(\cdot)+\varphi_3(\cdot))+l(N(\cdot)-\varphi_1(\cdot)-\varphi_3(\cdot))}\varphi_4(\cdot),\\
F_4(t,\varphi)&=\frac{\alpha_2\beta(t,\cdot)p\varphi_3(\cdot)}{p(\varphi_1(\cdot)+\varphi_3(\cdot))+l(N(\cdot)-\varphi_1(\cdot)-\varphi_3(\cdot))}(M^*(t,\cdot)-\varphi_2(\cdot)-\varphi_4(\cdot))
\end{align*}
for all $t\geq 0$ and $\varphi=(\varphi_1, \varphi_2, \varphi_3, \varphi_4)\in \mathbb{X}$. Then system $\eqref{model}$
becomes
\begin{equation*}
\label{abstract equation}
\left\{
\begin{split}
&\frac{du}{dt}=A(t)u+F(t,u),~~~& t>0,\\
&u(0)=\varphi\in\mathbb{X}^+,
\end{split}
\right.
\end{equation*}
which can be written as an integral equation
\begin{equation}
\label{integral equation}
\begin{split}
u(t,\varphi)=T(t,0)\varphi+\int_{0}^{t}T(t,s)F(s,u)ds,~~~\forall t\geq 0,~\varphi\in \mathbb{X}^+,
\end{split}
\end{equation}
where
$$u(t,x)=(u_1(t,x),u_2(t,x),u_3(t,x),u_4(t,x))=(I_1(t,x),I_{v1}(t,x),I_2(t,x),I_{v2}(t,x)).$$
As usual, solutions of \eqref{integral equation} are called mild solutions to system \eqref{model}.\\

%For a given continuous $\omega$-periodic function $g(t)$, we write
%$$\bar{g}=\max_{(t,x)\in[0,\omega]\times\bar{\Omega}}g(t,x),~~~\underline{g}=\min_{(t,x)\in[0,\omega]\times\bar{\Omega}}g(t,x).$$
%Then we have the following result for system \eqref{model}.

\begin{lemma}
\label{solution and periodic semiflow}
For any $\varphi\in X(0)$, system \eqref{model} has a unique solution $u(t,\cdot,\varphi)$ with $u(0,\cdot,\varphi)=\varphi$ such that $u(t,\cdot,\varphi)\in X(t)$ for all $t\in [0,\infty)$. Moreover, system \eqref{model} generates an $\omega$-periodic semiflow $Q(t): X(0)\rightarrow X(t)$ defined by $Q(t)\varphi=u(t,\cdot,\varphi), t\geq 0$. In addition, $Q:=Q(\omega)$ admits a strong global attractor in $X(0)$.
\end{lemma}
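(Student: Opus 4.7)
The plan is to adapt the standard Martin--Smith type well-posedness framework for reaction-diffusion systems, as used in \cite{Bai et al. 2018} and \cite{Wang and Zhao 2017}, to the time-varying state space $X(t)$. I work directly with the mild formulation \eqref{integral equation}. The first observation is that on $X(t)$ the denominator appearing in each $J_i$ satisfies
\[
p(\varphi_1+\varphi_3)+l\bigl(N(x)-\varphi_1-\varphi_3\bigr) = (p-l)(\varphi_1+\varphi_3)+lN(x) \geq l\min_{x\in\bar{\Omega}}N(x) > 0,
\]
because $p\geq l>0$ and $N(x)>0$. Hence $F(t,\cdot)$ is $C^1$, in particular locally Lipschitz, on $X(t)$. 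Local existence and uniqueness of the mild solution then follow by a standard contraction argument on \eqref{integral equation}, combined with the positivity and smoothing of the evolution operators $T_j(t,s)$.

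Next I would verify the subtangential (Nagumo--Brezis) conditions that force the local solution to remain in $X(t)$. Nonnegativity of each component is immediate from the structure of $F$: whenever $\varphi_i=0$ one reads off directly that $F_i(t,\varphi)\geq 0$ for $\varphi\in X(t)$. For the constraint $\varphi_1+\varphi_3\leq N(x)$, on the boundary $\varphi_1+\varphi_3=N(x)$ the numerators of $F_1$ and $F_3$ vanish and the linear reaction yields $-(d+\gamma_1)\varphi_1-(d+\gamma_2)\varphi_3\leq 0$, so the boundary is not crossed. The more delicate constraint is $\varphi_2+\varphi_4\leq M^*(t,x)$, since the bounding surface itself depends on $t$; here I introduce the slack variable $\psi:=M^*-u_2-u_4$ and subtract the PDE \eqref{total density of female adult mosquitoes} for $M^*(t,x)$ from the sum of the $u_2$- and $u_4$-equations to obtain
\[
\partial_t\psi = D_v\Delta\psi - \Bigl(\eta(t,x)+\frac{\beta(t,x)\,p(\alpha_1 u_1+\alpha_2 u_3)}{p(u_1+u_3)+l(N-u_1-u_3)}\Bigr)\psi + \Lambda(t,x)
\]
under Neumann boundary conditions. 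Since $\Lambda\geq 0$ and $\psi(0,\cdot)\geq 0$, the parabolic maximum principle yields $\psi\geq 0$ for all $t\geq 0$. Combined with the uniform a priori bound $\|u(t,\cdot)\|\leq \max_{\bar{\Omega}}N+\sup_{t,x}M^*(t,x)$, this invariance extends the local solution to all of $[0,\infty)$.

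With global existence, $Q(t)\varphi:=u(t,\cdot,\varphi)$ is well defined from $X(0)$ into $X(t)$, and the $\omega$-periodicity of $\beta$, $\eta$, and $M^*(t,x)$ together with uniqueness of solutions yield the semiflow identity $Q(t+\omega)\varphi = Q(t)\bigl(Q(\omega)\varphi\bigr)$. Hence $Q:=Q(\omega)$ is a well-defined discrete dynamical system on $X(0)$. The uniform bound above makes $Q$ point dissipative, and the compactness of $T_j(t,s)$ for $t>s$, inserted into \eqref{integral equation}, shows that $Q$ is a compact map. Existence of a strong global attractor then follows from the classical criterion for point dissipative, compact maps.

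The main technical obstacle is the invariance along the moving upper boundary $\varphi_2+\varphi_4=M^*(t,x)$: unlike the time-independent constraints, one cannot argue purely by a pointwise subtangential inequality on $F$, but must exploit the PDE satisfied by $M^*$ itself, rearranging the system into the maximum-principle-friendly equation for the slack $\psi$ displayed above. The remaining subtangential inequalities, the global extension, and the attractor step are then essentially routine once the local Lipschitz property and the uniform bound are secured.
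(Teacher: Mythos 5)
Your proposal is correct and follows essentially the same route as the paper: local Lipschitz continuity of $F$ together with a quasi-positivity (subtangential) argument in the spirit of Martin--Smith for nonnegativity, the comparison/maximum principle applied to the summed equations with $N(x)$ and $M^*(t,x)$ as upper solutions to keep the solution in $X(t)$ and obtain global existence (your slack variable $\psi=M^*-u_2-u_4$ is precisely the paper's assertion that $M^*$ is an upper solution for $I_{v1}+I_{v2}$), and point dissipativity plus compactness of the Poincar\'{e} map for the strong global attractor. The only cosmetic difference is that the paper first verifies the subtangential condition on the fixed cone $\mathbb{X}^+$ (hence its more careful lower-bound estimate, since $N-\psi_1-\psi_3$ need not be nonnegative there) and then derives the upper bounds by comparison, whereas you work on $X(t)$ throughout.
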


\begin{proof} From the expression of $F$, we see that $F$ is locally Lipschitz continuous. For any $(t,\psi)\in \R_+\times \mathbb{X}^+$ and $h>0$, in view of $p\geq l>0$, we have
\begin{equation*}
\begin{array}{c}
\begin{aligned}
&\psi(x)+h F(t,\psi)(x)\\
&=\left(
\begin{aligned}
& \psi_1(x)+h\frac{c_1\beta(t,x)l(N(x)-\psi_1(x)-\psi_3(x))}{p(\psi_1(x)+\psi_3(x))+l(N(x)-\psi_1(x)-\psi_3(x))}\psi_2(x)\\
&\psi_2(x)+h\frac{ \alpha_1\beta(t,x)p\psi_1(x) (M^*(t,x)-\psi_2(x)-\psi_4(x))}{ p(\psi_1(x)+\psi_3(x))+l(N(x)-\psi_1(x)-\psi_3(x))}\\
& \psi_3(x)+h \frac{c_2\beta(t,x)l(N(x)-\psi_1(x)-\psi_3(x))}{p(\psi_1(x)+\psi_3(x))+l(N(x)-\psi_1(x)-\psi_3(x))}\psi_4(x)\\
& \psi_4(x)+h\frac{ \alpha_2\beta(t,x)p\psi_3(x)(M^*(t,x)-\psi_2(x)-\psi_4(x)) }{ p(\psi_1(x)+\psi_3(x))+l(N(x)-\psi_1(x)-\psi_3(x))}
\end{aligned}
\right)\\
&\geq\left(
\begin{aligned}
&\psi_1(x)\left(1-h\frac{c_1\beta(t,x)}{N(x)}\psi_2(x)\right)-h\frac{c_1\beta(t,x)}{N(x)}\psi_3(x)\psi_2(x)\\
& \psi_2(x)\left(1-h\frac{\alpha_1\beta(t,x)p}{lN(x)}\psi_1(x)\right)-h\frac{\alpha_1\beta(t,x)p}{lN(x)}\psi_1(x)\psi_4(x)\\
&
\psi_3(x)\left(1-h\frac{c_2\beta(t,x)}{N(x)}\psi_4(x)\right)-h\frac{c_2\beta(t,x)}{N(x)}\psi_1(x)\psi_4(x)\\
& \psi_4(x)\left(1-h\frac{\alpha_2\beta(t,x)p}{lN(x)}\psi_3(x)\right)-h\frac{\alpha_2\beta(t,x)p}{lN(x)}\psi_3(x)\psi_2(x)
\end{aligned}
\right).
\end{aligned}
\end{array}
\end{equation*}
This implies that
$$\lim_{h\rightarrow 0^+}\frac{1}{h}{\rm dist}(\psi+hF(t,\psi),~\mathbb{X}^+)=0,~~~\forall (t,\psi)\in \R_+\times\mathbb{X}^+.$$
In addition, $T(t,s)\mathbb{X}^+\subseteq\mathbb{X}^+, \forall t\geq s\geq 0$. Therefore, by
\cite[Corollary 4]{Martin and Smith 1990} with $K=\mathbb{X}^+$ and $S(t,s)=T(t,s)$, system \eqref{model} admits a unique non-continuable mild solution $u(t,\cdot,\psi)$ on its maximal existence interval $[0,t_\psi)$ with $u(0,\cdot,\psi)=\psi$, and $u(t,\cdot,\psi)\geq 0$ for all $t\in[0,t_\psi)$, where $t_{\psi}\leq\infty$.

Based on the above analysis, we obtain that for any $\varphi\in X(0)\subset \mathbb{X}^+$, system \eqref{model} has a unique
solution $u(t,\cdot,\varphi)\in \mathbb{X}^+$ on $[0,t_\varphi)$ with $u(0,\cdot,\varphi)=\varphi$, where $t_{\varphi}\leq\infty$.  Next we want to show that $u(t,x,\varphi)$ is bounded for all $t\in [0,t_\varphi)$, which then
implies $t_\varphi=\infty$. To this end, we set
\[I_h(t,x)=I_1(t,x)+I_2(t,x), ~~~I_v(t,x)=I_{v1}(t,x)+I_{v2}(t,x).\]
It turns out that $N(x)$ and $M^*(t,x)$ are respectively the upper solutions of the following two equations
\begin{align*}
\frac{\partial I_h(t,x)}{\partial t}=&D_h\Delta I_h(t,x)-dI_h(t,x)-\gamma_1 I_1(t,x)-\gamma_2 I_2(t,x)\\
& +J_1(I_1(t,x), I_2(t,x))I_{v1}(t,x)+J_3(I_1(t,x), I_2(t,x))I_{v2}(t,x),
\end{align*}
and
\begin{align*}
\frac{\partial I_v(t,x)}{\partial t}=&D_h\Delta I_v(t,x)-\eta(t,x)I_v(t,x)\\
&+J_2(I_1(t,x), I_2(t,x))(M^*(t,x)-I_v(t,x))\\
&+J_4(I_1(t,x), I_2(t,x))(M^*(t,x)-I_v(t,x))
\end{align*}
for $t\in (0,t_\varphi)$ and $x\in \bar{\Omega}$. Thus, the comparison principle implies that solutions of \eqref{model} are bounded on $[0, t_\varphi)$, and thus, $t_\varphi=\infty.$ In addition, we also have that $u(t,\cdot,\varphi)\in X(t)$ for all $t\geq 0$, and it is classic for $t>0$ in light of the analyticity of $T(t,s)$ when $t>s.$

Define a family of operators $\{Q(t)\}_{t\geq 0}$ from $X(0)$ to $X(t)$ by \[[Q(t)\varphi](x)=u(t,x,\varphi),~~~\forall\varphi\in X(0),~x\in\bar{\Omega}.\]
 By the proof of \cite[Lemma 2.1]{Zhang et al. 2015}, we can show that $Q(t)$ is an $\omega$-periodic semiflow, and thus $Q:=Q(\omega): X(0)\rightarrow X(\omega)=X(0)$ is the Poincar\'{e} map associated with system \eqref{model}. The fact that $u(t,\cdot,\varphi)\in X(t)$ for all $t\geq 0$ when $\varphi\in X(0)$ also implies that solutions of \eqref{model} are ultimately bounded. Hence, by \cite[Theorem 2.9]{Magal and Zhao 2005}, $Q$ has a strong global attractor in $X(0)$.
\end{proof}

\begin{lemma}
\label{solution}
For any $\varphi\in X(0)$, let $u(t,x,\varphi)$ be the solution of system \eqref{model}. If there exists some $t_0\geq0$ such that $u_i(t_0,x,\varphi)\not\equiv0,~i=1,2,3,4$, then
$$u_i(t,x,\varphi)>0,~~~i=1,2,3,4,~\forall t\geq t_0,~x\in\bar{\Omega}.$$
\end{lemma}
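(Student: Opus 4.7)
The plan is to apply the scalar parabolic strong maximum principle (together with the Hopf boundary-point lemma for the Neumann problem) to each of the four equations of \eqref{model} one at a time, after first using Lemma~\ref{solution and periodic semiflow} to ensure that the nonlinear right-hand sides split into a linear term with bounded coefficient plus a nonnegative remainder.

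To set this up, I would first invoke Lemma~\ref{solution and periodic semiflow} to conclude $u(t,\cdot,\varphi)\in X(t)$ for every $t\ge 0$; hence $N(x)-u_1-u_3\ge 0$ and $M^*(t,x)-u_2-u_4\ge 0$ pointwise on $[0,\infty)\times\bar\Omega$. Because $p\ge l>0$, the denominator of each $J_k$ is bounded below by $lN(x)>0$ on $\bar\Omega$, so each $J_k$ evaluated along the orbit is a bounded nonnegative function. Moreover, for $t>0$ the solution is classical by the analyticity of $T_j(t,s)$ for $t>s$, $j=1,2,3$, so the classical parabolic maximum principle applies.

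Next I would rewrite the four scalar equations as linear parabolic inequalities with nonnegative right-hand sides. The equations for $u_1$ and $u_3$ read
\begin{align*}
\partial_t u_1 - D_h\Delta u_1 + (d+\gamma_1)u_1 &= J_1(u_1,u_3)\,u_2 \;\ge\; 0,\\
\partial_t u_3 - D_h\Delta u_3 + (d+\gamma_2)u_3 &= J_3(u_1,u_3)\,u_4 \;\ge\; 0,
\end{align*}
while expanding the mosquito equations and grouping the $u_i$ term on the left yields
\begin{align*}
\partial_t u_2 - D_v\Delta u_2 + \bigl(\eta(t,x)+J_2(u_1,u_3)\bigr)u_2 &= J_2(u_1,u_3)\bigl(M^*(t,x)-u_4\bigr) \;\ge\; 0,\\
\partial_t u_4 - D_v\Delta u_4 + \bigl(\eta(t,x)+J_4(u_1,u_3)\bigr)u_4 &= J_4(u_1,u_3)\bigl(M^*(t,x)-u_2\bigr) \;\ge\; 0,
\end{align*}
with homogeneous Neumann boundary conditions. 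The right-hand sides are nonnegative because $u_4\le M^*-u_2$ and $u_2\le M^*-u_4$, and the coefficients in front of $u_i$ are bounded thanks to the boundedness of the $J_k$.

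Finally, for each $i\in\{1,2,3,4\}$, the function $u_i$ is nonnegative on $[t_0,\infty)\times\bar\Omega$, satisfies a linear parabolic inequality of the above form with bounded coefficient, and is not identically zero at $t=t_0$. The parabolic strong maximum principle together with the Hopf boundary-point lemma for the Neumann condition then forces $u_i(t,x)>0$ for all $t>t_0$ and all $x\in\bar\Omega$, which is the assertion of the lemma. I do not anticipate any substantive obstacle; the only point that requires care is verifying the boundedness of the coefficients $\eta+J_2$ and $\eta+J_4$, and this is precisely where the standing assumption $p\ge l>0$ enters, keeping the denominators of $J_2,J_4$ away from zero uniformly on $\bar\Omega$.
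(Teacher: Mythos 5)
Your proposal is correct and follows essentially the same route as the paper: the paper likewise uses the invariance $u(t,\cdot,\varphi)\in X(t)$ to drop the nonnegative reaction terms, reduce each component to a scalar linear parabolic differential inequality (bounding $\eta(t,x)$ by its maximum $\bar\eta$ rather than retaining $\eta+J_k$ as a variable coefficient, a purely cosmetic difference), and then applies the parabolic strong maximum principle of \cite[Proposition 13.1]{P.Hess} componentwise. No gap.
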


\begin{proof}
For any given $\varphi\in X(0)$, one easily sees
\begin{equation*}
\left\{
\begin{split}
\frac{\partial u_1(t,x)}{\partial t}&\geq D_h\Delta u_1(t,x)-(d+\gamma_1)u_1(t,x),~~~&&t>0,~x \in \Omega,\\
\frac{\partial u_2(t,x)}{\partial t}&\geq D_v\Delta u_2(t,x)-\bar{\eta}u_2(t,x),~~~&&t>0,~x \in \Omega,\\
\frac{\partial u_3(t,x)}{\partial t}&\geq D_h\Delta u_3(t,x)-(d+\gamma_2)u_3(t,x),~~~&&t>0,~x \in \Omega,\\
\frac{\partial u_4(t,x)}{\partial t}&\geq D_v\Delta u_4(t,x)-\bar{\eta}u_4(t,x),~~~&&t>0,~x \in \Omega,\\
\frac{\partial u_1(t,x)}{\partial \nu}&=\frac{\partial u_2(t,x)}{\partial \nu}=\frac{\partial u_3(t,x)}{\partial \nu}=\frac{\partial u_4(t,x)}{\partial \nu}=0,~~~&&t>0,~x\in\partial\Omega,
\end{split}
\right.
\end{equation*}
where $\bar{\eta}=\max_{(t,x)\in[0,\omega]\times\bar{\Omega}}\eta(t,x)$. If there exists $t_0\geq 0$ such that $u_i(t_0,x,\varphi)\not\equiv0$ for some $i\in\{1,2,3,4\}$, it then follows from the parabolic maximum principle \cite[Proposition 13.1]{P.Hess} that $u_i(t,x,\varphi)>0$ for all $t>t_0$ and $x\in\bar{\Omega}$.
\end{proof}

\section{Reproduction numbers}

In this section, we first define the basic reproduction number $\mathcal{R}_0$ of \eqref{model}, and then introduce the invasion reproduction number $\hat{\mathcal{R}}_i$ for strain $i~(i=1,2)$.

\subsection{Basic reproduction number}

In order to derive the basic reproduction number of \eqref{model}, we first consider subsystems: one involves sensitive strains alone and the other involves resistant strains alone. We fix $i\in\{1,2\}$ and let $I_j(t,x)\equiv 0,I_{vj}(t,x)\equiv 0,~\forall t\geq 0,~x\in\bar{\Omega},~j=1,2$ and $j\neq i$. Then system \eqref{model} reduces to the following single-strain model:
\begin{equation}
\label{single-strain model}
\left\{
\begin{split}
\frac{\partial I_i(t,x)}{\partial t}=&D_h\Delta I_i(t,x)-(d+\gamma_i)I_i(t,x)\\
&+\frac{c_i\beta(t,x)l(N(x)-I_i(t,x))}{pI_i(t,x)+l(N(x)-I_i(t,x))}I_{vi}(t,x),~&&t>0,~x \in \Omega,\\
\frac{\partial I_{vi}(t,x)}{\partial t}=&D_v\Delta I_{vi}(t,x)-\eta(t,x)I_{vi}(t,x)\\
&+\frac{\alpha_i\beta(t,x)pI_i(t,x)}{pI_i(t,x)+l(N(x)-I_i(t,x))}(M^*(t,x)-I_{vi}(t,x)),~&&t>0,~x \in \Omega,\\
\frac{\partial I_i(t,x)}{\partial \nu}=&\frac{\partial I_{vi}(t,x)}{\partial \nu}=0,~&&t>0,~x\in\partial\Omega,
\end{split}
\right.
\end{equation}

Let $\mathbb{E}:=C(\bar{\Omega},\mathbb{R}^2)$ and $\mathbb{E}^+:=C(\bar{\Omega},\mathbb{R}_+^2)$.
Linearizing \eqref{single-strain model} at $(0, 0)$ yields
\begin{equation}
\label{linearizing the single-strain model}
\left\{
\begin{split}
\frac{\partial I_i(t,x)}{\partial t}=&D_h\Delta I_i(t,x)-(d+\gamma_i)I_i(t,x)\\
&+c_i\beta(t,x)I_{vi}(t,x),~&&t>0,~x \in \Omega,\\
\frac{\partial I_{vi}(t,x)}{\partial t}=&D_v\Delta I_{vi}(t,x)-\eta(t,x)I_{vi}(t,x)\\
&+\frac{\alpha_i\beta(t,x)pM^*(t,x)}{lN(x)}I_i(t,x),~&&t>0,~x \in \Omega,\\
\frac{\partial I_i(t,x)}{\partial \nu}=&\frac{\partial I_{vi}(t,x)}{\partial \nu}=0,~&&t>0,~x\in\partial\Omega.\\
\end{split}
\right.
\end{equation}
Define the operator $\mathcal{F}_i(t):\mathbb{E}\rightarrow\mathbb{E}$ by
$$
\mathcal{F}_i(t)
\left(
\begin{array}{c}
\psi_{1}\\
\psi_{2}
\end{array}
\right)
=
\left(
\begin{array}{c}
c_i\beta(t,\cdot)\psi_2(\cdot)\\
\frac{\alpha_i\beta(t,\cdot)pM^*(t,\cdot)}{lN(\cdot)}\psi_1(\cdot)
\end{array}
\right),~~~\forall t\in\mathbb{R},~\psi=(\psi_1, \psi_2)\in\mathbb{E}.
$$
Let $-V_i(t)v=D\Delta v-W_i(t)v$, where $D={\rm diag}(D_h,D_v)$ and
$$
-[W_i(t)](x)
=
\left(
\begin{array}{cc}
-(d+\gamma_i)&0\\
0& -\eta(t,x)
\end{array}
\right),~~~\forall t\in\mathbb{R},~x\in\bar{\Omega}.
$$
Then $\Psi_i(t,s)={\rm diag}(T_i(t,s),T_3(t,s)),~t\geq s$, is the evolution operator on $\mathbb{E}$ associated with the following system
$$\frac{dv}{dt}=-V_i(t)v$$
subject to the Neumann boundary condition.
The exponential growth bound of $\Psi_i(t,s)$ is defined as
$$\bar{\omega}(\Psi_i)=\inf\{\tilde{\omega}_i:~\exists M\geq 1~\mbox{such that}~\|\Psi_i(t+s,s)\|_{\mathbb{E}}\leq Me^{\tilde{\omega}_it},~\forall s\in\mathbb{R},~t\geq0\}.$$
By the Krein-Rutman Theorem and \cite[Lemma 14.2]{P.Hess}, we have
$$0<r(\Psi_i(\omega,0))=\max\{r(T_i(\omega,0)), r(T_3(\omega,0))\}<1,$$
where $r(\Psi_i(\omega,0))$ is the spectral radius of $\Psi_i(\omega,0)$. Then, it follows from \cite[Proposition 5.5]{Thieme 2009} with $s=0$ that $\bar{\omega}(\Psi_i)<0$. Note that $\Psi_i(t,s)$ is a positive operator in the sense that $\Psi_i(t,s)\mathbb{E}^+\subseteq \mathbb{E}^+$ for all $t\geq s$. Therefore, $\mathcal{F}_i(t)$ and $\Psi_i(t,s)$ satisfy
\begin{itemize}
\item [(H1)] For each $t\geq 0$, $\mathcal{F}_i(t)$ is a positive operator on $\mathbb{E}$.
\item [(H2)] For any $t\geq s$, $\Psi_i(t,s)$ is a positive operator on $\mathbb{E}$, and $\bar{\omega}(\Psi_i)<0$.
\end{itemize}

Let $C_\omega(\mathbb{R},\mathbb{E})$ be the Banach space of all $\omega$-periodic and continuous functions from $\mathbb{R}$ to $\mathbb{E}$ equipped with the maximum norm. Following the theory developed in \cite{Zhao 2017a,Liang et al. 2019}, we define two linear operators on $C_\omega(\mathbb{R},\mathbb{E})$ by
$$[\mathcal{L}_iv](t):=\int^{\infty}_{0}\Psi_i(t,t-s)\mathcal{F}_i(t-s)v(t-s)ds,~~~\forall t\in\mathbb{R},~v\in C_\omega(\mathbb{R},\mathbb{E}),~i=1,2.$$
Motivated by the concept of next generation operators \cite{Thieme 2009,Bacaer and Guernaoui 2006}, we define the basic reproduction number as $\mathcal{R}_i:=r(\mathcal{L}_i)$, where $r(\mathcal{L}_i)$ is the spectral radius of $\mathcal{L}_i$.

The disease-free state of \eqref{model} is $(0, 0, 0, 0)$ and the corresponding linearized system is
\begin{equation}
\label{linearizing the model}
\left\{
\footnotesize
\begin{split}
&\frac{\partial I_1(t,x)}{\partial t}=D_h\Delta I_1(t,x)-(d+\gamma_1)I_1(t,x)+c_1\beta(t,x)I_{v1}(t,x),~~~&&t>0,~x \in \Omega,\\
&\frac{\partial I_{v1}(t,x)}{\partial t}=D_v\Delta I_{v1}(t,x)-\eta(t,x)I_{v1}(t,x)+\frac{\alpha_1\beta(t,x)pM^*(t,x)}{lN(x)}I_1(t,x),~~~&&t>0,~x \in \Omega,\\
&\frac{\partial I_2(t,x)}{\partial t}=D_h\Delta I_2(t,x)-(d+\gamma_2)I_2(t,x)+c_2\beta(t,x)I_{v2}(t,x),~~~&&t>0,~x \in \Omega,\\
&\frac{\partial I_{v2}(t,x)}{\partial t}=D_v\Delta I_{v2}(t,x)-\eta(t,x)I_{v2}(t,x)+\frac{\alpha_2\beta(t,x)pM^*(t,x)}{lN(x)}I_2(t,x),~~~&&t>0,~x \in \Omega,\\
&\frac{\partial I_1(t,x)}{\partial \nu}=\frac{\partial I_{v1}(t,x)}{\partial \nu}=\frac{\partial I_2(t,x)}{\partial \nu}=\frac{\partial I_{v2}(t,x)}{\partial \nu}=0,~~~&&t>0,~x\in\partial\Omega.\\
\end{split}
\right.
\end{equation}
Similarly, we can derive the basic reproduction number of \eqref{model}, which is given by \[\mathcal{R}_0=\max\{\mathcal{R}_1,\mathcal{R}_2\}.\]
For any given $t\geq 0$, let $P_i(t)$ be the solution map of \eqref{linearizing the single-strain model} on $\mathbb{E}$. Then $P_i:=P_i(\omega)$ is the associated Poincar\'{e} map. Let $r(P_i)$ be the spectral radius of $P_i$. By \cite[Theorem 3.7]{Liang et al. 2019} with $\tau=0$, we have the following nice property.

\begin{lemma}
\label{same sign R_i}
$\mathcal{R}_i-1$ has the same sign as $r(P_i)-1, i=1,2$, and thus $\mathcal{R}_0-1$ has the same sign as $r(P)-1$, where $r(P)=\max\{r(P_1),r(P_2)\}$ is the spectral radius of the Poincar\'{e} map $P$ associated with \eqref{linearizing the model}.
\end{lemma}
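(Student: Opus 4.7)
My plan is to reduce the lemma to a direct application of \cite[Theorem 3.7]{Liang et al. 2019} on each single-strain subsystem, and then glue the two pieces together by exploiting the block-diagonal structure of the full linearized system \eqref{linearizing the model}.

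First I would verify that, for each fixed $i\in\{1,2\}$, the pair $(\mathcal{F}_i(t),\Psi_i(t,s))$ fulfils the hypotheses needed by that theorem with $\tau=0$: positivity of $\mathcal{F}_i(t)$ and $\Psi_i(t,s)$ on $\mathbb{E}$ together with $\bar{\omega}(\Psi_i)<0$ are precisely the conditions (H1) and (H2) already recorded above, while the $\omega$-periodicity in $t$ is built into the coefficients $\beta(t,x)$, $\eta(t,x)$, $M^*(t,x)$. Invoking \cite[Theorem 3.7]{Liang et al. 2019} then gives $\mathrm{sign}(\mathcal{R}_i-1)=\mathrm{sign}(r(P_i)-1)$, which is the first half of the claim.

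For the second half I would exploit the fact that \eqref{linearizing the model} decouples: at the disease-free state $(0,0,0,0)$ all the vector-bias denominators reduce to $lN(x)$, and there are no cross terms between strain $1$ and strain $2$ in the Fr\'echet derivative. Consequently the $4\times 4$ solution map $P(t)$ is, after the obvious permutation of components, block diagonal with diagonal blocks exactly the maps $P_1(t)$ and $P_2(t)$ coming from \eqref{linearizing the single-strain model}, so $r(P)=\max\{r(P_1),r(P_2)\}$; the analogous reasoning for the next-generation operators gives $r(\mathcal{L})=\max\{r(\mathcal{L}_1),r(\mathcal{L}_2)\}=\mathcal{R}_0$. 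Combined with the sign equivalences already obtained for each strain, I would conclude via
\begin{equation*}
\mathcal{R}_0>1\iff \max_i \mathcal{R}_i>1 \iff \max_i r(P_i)>1 \iff r(P)>1,
\end{equation*}
together with the analogous equivalences with $<$ and $=$, that $\mathcal{R}_0-1$ and $r(P)-1$ have the same sign.

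The only real difficulty is packaged into \cite[Theorem 3.7]{Liang et al. 2019}, so once (H1)--(H2) are checked the per-strain sign equivalence is automatic; the remaining decoupling step is an elementary consequence of the vanishing of all strain-coupling terms at the disease-free equilibrium, which is what makes $\mathcal{R}_0=\max\{\mathcal{R}_1,\mathcal{R}_2\}$ meaningful in the first place.
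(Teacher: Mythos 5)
Your proposal is correct and matches the paper's approach: the paper likewise obtains the per-strain sign equivalence by verifying (H1)--(H2) and invoking \cite[Theorem 3.7]{Liang et al. 2019} with $\tau=0$, and the passage to $\mathcal{R}_0$ and $r(P)$ is exactly the elementary max/decoupling observation you describe (which the paper leaves implicit). No gaps.
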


%\begin{lemma}
%\label{same sign R_0}
%$\mathcal{R}_0-1$ has the same sign as $r(P)-1$, where $r(P)=\max\{r(P_1),r(P_2)\}$ and $P:=P(\omega)$ is the Poincar\'{e} map associated with \eqref{linearizing the model}.
%\end{lemma}

\subsection{Invasion reproduction number}
\label{Section 3.2}

In this subsection, we define the invasion reproduction number for each strain. The invasion reproduction number gives the ability of strain $i~(i=1,2)$ to invade strain $j~(j=1,2,~j\neq i)$ measured as the number of secondary infections strain $i$ one-infected individual can produce in a population where strain $j$ is at an endemic state \cite{Tuncer and Martcheva 2012}. We express it by $\hat{\mathcal{R}_i}~(i=1,2)$ and give their definition by analyzing the boundary $\omega$-periodic solution of \eqref{model}, that is, the sensitive strain $\omega$-periodic solution or resistant strain $\omega$-periodic solution.

For each $t\geq 0$, let $E(t)$ be subset in $\mathbb{E}$ defined by
$$E(t):=\{\psi=(\psi_1,\psi_2)\in\mathbb{E}^+: 0\leq\psi_1(x)\leq N(x),~ 0\leq\psi_2(x)\leq M^*(t,x),~\forall x\in\bar{\Omega}\}.$$
After a similar process in \cite[Lemma 3]{Li and Zhao 2021}, we obtain that for any $\psi\in E(0)$, system \eqref{single-strain model} has a unique solution $v_i(t,\cdot,\psi)=(I_i(t,x),I_{vi}(t,x))$ with $v_i(0,\cdot,\psi)=\psi$ such that $v_i(t,\cdot,\psi)\in E(t)$ for all $t\geq 0$. Moreover, by employing the arguments in \cite[Theorem 1]{Li and Zhao 2021}, one immediately obtains the following result.
\begin{theorem}
\label{single-strain solution}
The following statements are valid:
\begin{itemize}
\item [(i)] If $\mathcal{R}_i\leq 1$, then $(0, 0)$ is globally asymptotically stable for \eqref{single-strain model} in $E(0)$;
\item [(ii)] If $\mathcal{R}_i>1$, then system \eqref{single-strain model} admits a unique positive $\omega$-periodic solution $(I_i^*(t,x),I_{vi}^*(t,x))$, and it is globally asymptotically stable for \eqref{single-strain model} in $E(0)\setminus\{(0,0)\}$.
\end{itemize}
\end{theorem}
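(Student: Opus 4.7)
The plan is to analyze the Poincaré map $P_i$ of \eqref{single-strain model} on the ordered Banach space $\mathbb{E}$ and apply the theory of monotone, strictly subhomogeneous periodic semiflows along the lines of [Li and Zhao 2021, Theorem 1]. By Lemma \ref{same sign R_i}, the sign of $\mathcal{R}_i-1$ coincides with that of $r(P_i)-1$, so it suffices to prove the dichotomy (i)--(ii) in terms of $r(P_i)$ and then translate.

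First I would verify that the solution maps $Q_i(t):E(0)\to E(t)$ of \eqref{single-strain model} are well-defined, order-preserving, strongly monotone, and ultimately compact. Well-posedness and forward invariance of $E(t)$ follow by specializing the argument of Lemma \ref{solution and periodic semiflow} to the two-component subsystem; the required quasipositivity of $F$ already lives inside the computation there. Because the right-hand side of \eqref{single-strain model} is cooperative on $E(t)$ (both off-diagonal couplings $\frac{c_i\beta l(N-I_i)}{pI_i+l(N-I_i)}$ and $\frac{\alpha_i\beta p(M^*-I_{vi})}{pI_i+l(N-I_i)}$ are nonnegative there), the comparison principle for weakly coupled parabolic systems gives monotonicity of $P_i:=Q_i(\omega)$. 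The parabolic strong maximum principle used in Lemma \ref{solution} upgrades this to strong monotonicity on $E(0)\setminus\{0\}$, and the smoothing of the evolution operator $\Psi_i(t,s)$ yields compactness of $P_i$.

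The structural property I need next, and the step I expect to be the main obstacle, is strict subhomogeneity: for every $\psi\in E(0)$ with $\psi\gg 0$ componentwise and every $\lambda\in(0,1)$, one should have $P_i(\lambda\psi)\gg\lambda P_i(\psi)$. The vector-bias denominators make this delicate; to handle them I would write each nonlinearity as a product of a saturating Holling-II factor in $(I_i,I_{vi})$ and a linear factor, verify pointwise that the map $(I_i,I_{vi})\mapsto\bigl(\frac{c_i\beta l(N-I_i)}{pI_i+l(N-I_i)}I_{vi},\,\frac{\alpha_i\beta pI_i}{pI_i+l(N-I_i)}(M^*-I_{vi})\bigr)$ is strictly sublinear along rays, i.e.\ $f(\lambda I_i,\lambda I_{vi})\gneq \lambda f(I_i,I_{vi})$ on the positive cone, and then show via the strong parabolic comparison principle that $w(t,x):=\lambda v_i(t,x,\psi)$ is a strict lower solution of the equation satisfied by $v_i(t,\cdot,\lambda\psi)$. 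This yields the required strict inequality at $t=\omega$.

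With monotonicity, compactness, strong positivity and strict subhomogeneity established, and with the Fréchet derivative of $P_i$ at $0$ being precisely the Poincaré map of the linearized system \eqref{linearizing the single-strain model} of spectral radius $r(P_i)$, the dichotomy theorem for monotone subhomogeneous maps on ordered Banach spaces (cf.\ the cited [Li and Zhao 2021, Theorem 1]) applies directly. If $r(P_i)\leq 1$, every orbit of $P_i$ in $E(0)$ converges to $0$, and since $\bar{\omega}(\Psi_i)<0$ bounds the nonlinear orbit from above on each $\omega$-interval, $(0,0)$ is globally asymptotically stable in $E(0)$, proving (i). If $r(P_i)>1$, the same theorem delivers a unique positive fixed point $\psi_i^*$ of $P_i$ attracting every orbit in $E(0)\setminus\{0\}$; its forward orbit under the periodic semiflow is the desired unique positive $\omega$-periodic solution $(I_i^*(t,x),I_{vi}^*(t,x))$, and the global attractivity in continuous time follows by interpolating the discrete-time convergence of $P_i^n\psi\to\psi_i^*$ through the continuous dependence of $Q_i(t)\psi$ on $\psi$, proving (ii).
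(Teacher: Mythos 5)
Your proposal is correct and follows essentially the same route as the paper: the authors simply invoke the arguments of [Li and Zhao 2021, Theorem 1], which is exactly the monotone, strongly positive, strictly subhomogeneous Poincar\'{e}-map framework (threshold dichotomy via $r(P_i)$ and Lemma \ref{same sign R_i}) that you spell out. The details you supply — cooperativity on $E(t)$, strong monotonicity via the maximum principle, compactness, and the ray-wise sublinearity of the vector-bias nonlinearities using $p\geq l$ — are precisely the verifications implicit in that citation.
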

For ease of presentation, we introduce the following notations:\\
$\bullet$ $E_0=(0, 0, 0, 0)$: The disease-free state of \eqref{model}.\\
$\bullet$ $E_1(t,x)=(I_1^*(t,x), I_{v1}^*(t,x), 0, 0)$: The sensitive strain $\omega$-periodic solution of \eqref{model}.\\
$\bullet$  $E_2(t,x)=(0, 0, I_2^*(t,x), I_{v2}^*(t,x))$: The resistant strain $\omega$-periodic solution of \eqref{model}.

By Theorem 3.1, we see that when $\mathcal{R}_i>1~(i=1,2)$, system \eqref{model} admits a unique semitrivial boundary $\omega$-periodic solution $E_i(t,x)$. Linearizing \eqref{model} at the $E_j(t,x),~j\neq i,~i,j=1,2$, and considering only the equations for $I_i(t,x)$ and $I_{vi}(t,x)$, we get
\begin{equation}
\label{linearizing boundary}
\left\{
\begin{split}
\frac{\partial I_i(t,x)}{\partial t}=&D_h\Delta I_i(t,x)-(d+\gamma_i)I_i(t,x)\\
&+\frac{c_i\beta(t,x)l(N(x)-I_j^*(t,x))}{pI_j^*(t,x)+l(N(x)-I_j^*(t,x))}I_{vi}(t,x),~~~&&t>0,~x \in \Omega,\\
\frac{\partial I_{vi}(t,x)}{\partial t}=&D_v\Delta I_{vi}(t,x)-\eta(t,x)I_{vi}(t,x)\\
&+\frac{\alpha_i\beta(t,x)p(M^*(t,x)-I_{vj}^*(t,x))}{pI_j^*(t,x)+l(N(x)-I_j^*(t,x))}I_i(t,x),~~~&&t>0,~x \in \Omega,\\
\frac{\partial I_i(t,x)}{\partial \nu}=&\frac{\partial I_{vi}(t,x)}{\partial \nu}=0,~~~&&t>0,~x\in\partial\Omega.\\
\end{split}
\right.
\end{equation}
Similar to Section 3.1, we can define the invasion reproduction numbers $\hat{\mathcal{R}}_i~(i=1,2)$. Further, we have the following characterization of $\hat{\mathcal{R}}_i$.

\begin{lemma}
\label{same sign i}
$\hat{\mathcal{R}}_i-1$ has the same sign as $r(\hat{P}_i)-1$, where $\hat{P}_i$ is the Poincar\'{e} map associated with \eqref{linearizing boundary}, and $r(\hat{P}_i)$ is the spectral radius of $\hat{P}_i$.\
\end{lemma}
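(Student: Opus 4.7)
The plan is to mirror the construction carried out for $\mathcal{R}_i$ in Section 3.1, replacing the disease-free background with the semitrivial periodic state $E_j(t,x)$. Fix $i \in \{1,2\}$ and set $j = 3-i$. Keep the transition part $V_i(t)$, and hence the evolution family $\Psi_i(t,s) = \mathrm{diag}(T_i(t,s), T_3(t,s))$, exactly as in Section 3.1, and introduce the new-infection operator $\hat{\mathcal{F}}_i(t): \mathbb{E} \to \mathbb{E}$ by
\begin{equation*}
\hat{\mathcal{F}}_i(t)\begin{pmatrix}\psi_1\\ \psi_2\end{pmatrix}
=\begin{pmatrix}
\dfrac{c_i\beta(t,\cdot)\,l(N(\cdot)-I_j^*(t,\cdot))}{p I_j^*(t,\cdot)+l(N(\cdot)-I_j^*(t,\cdot))}\,\psi_2(\cdot)\\[1.2ex]
\dfrac{\alpha_i\beta(t,\cdot)\,p(M^*(t,\cdot)-I_{vj}^*(t,\cdot))}{p I_j^*(t,\cdot)+l(N(\cdot)-I_j^*(t,\cdot))}\,\psi_1(\cdot)
\end{pmatrix}.
\end{equation*}
Then define $\hat{\mathcal{L}}_i$ on $C_\omega(\mathbb{R},\mathbb{E})$ by $[\hat{\mathcal{L}}_i v](t) := \int_0^{\infty} \Psi_i(t,t-s)\hat{\mathcal{F}}_i(t-s)v(t-s)\,ds$, and set $\hat{\mathcal{R}}_i := r(\hat{\mathcal{L}}_i)$, which is the precise version of the invasion reproduction number alluded to in Section 3.2.

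The next step is to check that the abstract hypotheses (H1) and (H2) hold, so that the general theory of \cite{Liang et al. 2019} applies. By Theorem 3.1, $(I_j^*(t,\cdot), I_{vj}^*(t,\cdot))\in E(t)$ for all $t$, so $0\leq I_j^*\leq N$ and $0\leq I_{vj}^*\leq M^*$; hence the numerators in $\hat{\mathcal{F}}_i(t)$ are nonnegative, giving (H1). Moreover, since $p\geq l>0$ and $N(x)$ is positive on $\bar{\Omega}$,
\begin{equation*}
p I_j^*(t,x)+l(N(x)-I_j^*(t,x)) = l N(x) + (p-l) I_j^*(t,x) \geq l\min_{x\in\bar{\Omega}} N(x) > 0,
\end{equation*}
so the denominator is uniformly bounded below and $\hat{\mathcal{F}}_i(\cdot)$ is continuous and $\omega$-periodic in $t$ (using the $\omega$-periodicity of $\beta$, $M^*$, $I_j^*$, $I_{vj}^*$). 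Hypothesis (H2) is identical to that verified in Section 3.1 since the transition part is unchanged, so $\bar\omega(\Psi_i)<0$ still holds.

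With (H1)--(H2) in hand, I would invoke \cite[Theorem 3.7]{Liang et al. 2019} with $\tau = 0$ directly to the linear periodic system \eqref{linearizing boundary}: it yields that $\hat{\mathcal{R}}_i - 1 = r(\hat{\mathcal{L}}_i) - 1$ and $r(\hat{P}_i) - 1$ have the same sign, where $\hat{P}_i$ is the Poincaré map of \eqref{linearizing boundary} on $\mathbb{E}$. The main (and essentially only) subtlety is the verification that the coefficient matrix of \eqref{linearizing boundary} fits the positivity/decay framework of \cite{Liang et al. 2019}; once the uniform lower bound on the denominator and the inequalities $I_j^*\leq N$, $I_{vj}^*\leq M^*$ are established, the rest is a direct appeal to that theorem, entirely parallel to the proof of Lemma \ref{same sign R_i}.
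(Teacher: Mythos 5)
Your proposal is correct and follows the same route the paper intends: the paper gives no separate proof of this lemma, simply asserting that the construction of Section 3.1 carries over to the linearization \eqref{linearizing boundary} at $E_j(t,x)$ and that \cite[Theorem 3.7]{Liang et al. 2019} applies. Your explicit definition of $\hat{\mathcal{F}}_i(t)$, the verification of (H1)--(H2) via $0\leq I_j^*\leq N$, $0\leq I_{vj}^*\leq M^*$ and the lower bound $pI_j^*+l(N-I_j^*)\geq l\min_{\bar\Omega}N>0$, and the final appeal to that theorem supply exactly the details the paper leaves implicit.
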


\section{Disease extinction and uniform persistence}

In this section, we establish the dynamics of \eqref{model} in terms of $\mathcal{R}_i$ and $\hat{\mathcal{R}}_i, i=1,2$.

\subsection{Global extinction}

\begin{theorem}
\label{global attractive}
If $\mathcal{R}_1<1$ and $\mathcal{R}_2<1$, then $E_0$ is globally attractive for \eqref{model} in $X(0)$.
\end{theorem}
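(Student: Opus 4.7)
The plan is to dominate each strain's $(I_i,I_{vi})$-component by the solution of the single-strain linearization \eqref{linearizing the single-strain model} starting from the same initial data, and then to invoke $r(P_i)<1$ from Lemma \ref{same sign R_i}. This route works because the linearization of \eqref{model} at $E_0$ decouples into two independent cooperative $2\times 2$ subsystems, one for each strain, even though \eqref{model} itself is competitive across strains.

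First I would establish a sublinear bound on $F$. Using $p\geq l>0$, for every $\varphi\in X(t)$ one has
\[
p(\varphi_1+\varphi_3)+l(N(\cdot)-\varphi_1-\varphi_3)=(p-l)(\varphi_1+\varphi_3)+lN(\cdot)\geq lN(\cdot),
\]
together with $l(N(\cdot)-\varphi_1-\varphi_3)\leq lN(\cdot)$ and $M^*(t,\cdot)-\varphi_2-\varphi_4\leq M^*(t,\cdot)$. Plugging these into $F_1,\ldots,F_4$ gives the componentwise bounds
\[
F_1\leq c_1\beta\,\varphi_2,\quad F_2\leq\tfrac{\alpha_1\beta p M^*}{lN}\,\varphi_1,\quad F_3\leq c_2\beta\,\varphi_4,\quad F_4\leq\tfrac{\alpha_2\beta p M^*}{lN}\,\varphi_3,
\]
so $F$ is dominated componentwise by the linear right-hand side of \eqref{linearizing the model}.

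Fix $\varphi\in X(0)$, write $u(t,\cdot,\varphi)=(I_1,I_{v1},I_2,I_{v2})$, and for each $i\in\{1,2\}$ let $(\bar{I}_i,\bar{I}_{vi})$ solve \eqref{linearizing the single-strain model} with initial datum $(I_i^0,I_{vi}^0)$. Setting $w=(\bar{I}_i-I_i,\bar{I}_{vi}-I_{vi})$, subtracting the equations shows that $w$ satisfies a cooperative linear parabolic system $\partial_t w=\mathcal{A}_i(t)w+R_i(t,u)$ with Neumann boundary conditions, zero initial data, and a nonnegative forcing term $R_i(t,u)\geq 0$ (the latter coming from the sublinear bounds just derived). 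Duhamel's formula combined with positivity of the evolution family of \eqref{linearizing the single-strain model} yields $w\geq 0$, i.e., $0\leq I_i(t,\cdot)\leq \bar{I}_i(t,\cdot)$ and $0\leq I_{vi}(t,\cdot)\leq \bar{I}_{vi}(t,\cdot)$ on $\bar{\Omega}$ for every $t\geq 0$.

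Finally, Lemma \ref{same sign R_i} and $\mathcal{R}_i<1$ give $r(P_i)<1$. Since $P_i$ is compact and strongly positive on $\mathbb{E}$, $\|P_i^n\|\to 0$ as $n\to\infty$, so $(\bar{I}_i,\bar{I}_{vi})(t,\cdot)\to(0,0)$ uniformly on $\bar{\Omega}$ as $t\to\infty$. Combined with the previous comparison this forces $u(t,\cdot,\varphi)\to E_0$. The principal subtlety worth noting is that a direct comparison against the nonlinear single-strain system (through Theorem \ref{single-strain solution}) would fail, because $J_k(I_1,I_2)$ couples the two strains in a way that breaks the monotonicity needed for the comparison to go through; routing the argument via the fully decoupled cooperative linearization avoids this obstacle.
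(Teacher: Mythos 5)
Your proof is correct, but it takes a genuinely different route from the paper's. The paper compares $(I_i,I_{vi})$ directly with the \emph{nonlinear} single-strain system \eqref{single-strain model}: since $p\ge l$ makes the denominator $p(I_1+I_2)+l(N-I_1-I_2)=(p-l)(I_1+I_2)+lN$ increasing in the other strain's density, dropping $I_j$ and $I_{vj}$ only increases the right-hand side of the $(I_i,I_{vi})$-equations, so $(I_i,I_{vi})$ is a subsolution of \eqref{single-strain model}; that system is cooperative on $E(t)$, so the comparison principle applies, and Theorem \ref{single-strain solution}(i) finishes the argument. You instead dominate by the \emph{linear} system \eqref{linearizing the single-strain model} and invoke $r(P_i)<1$ via Lemma \ref{same sign R_i}; your sublinear bounds on $F$ and the Duhamel positivity argument for $w$ are sound, and $\|P_i^n\|\to 0$ already follows from Gelfand's formula, without compactness or strong positivity. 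Your route is more elementary in that it bypasses Theorem \ref{single-strain solution} entirely (whose proof the paper only cites from \cite{Li and Zhao 2021}); the paper's route has the advantage of extending to the critical case $\mathcal{R}_i=1$, where $r(P_i)=1$ and the linear comparison alone yields no decay. One correction: your closing remark is mistaken. The direct comparison against the nonlinear single-strain system does \emph{not} fail --- it is precisely what the paper does. The cross-strain coupling in the $J_k$ is monotone decreasing in the other strain's variables (again because $p\ge l$), so it can be discarded in the favorable direction, and the resulting two-variable single-strain system is cooperative in $(I_i,I_{vi})$ on $E(t)$, which is all the comparison principle requires.
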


\begin{proof} Let $(I_1(t,x),I_{v1}(t,x),I_2(t,x),I_{v2}(t,x))$ be the solution of \eqref{model} with initial data $\varphi\in X(0)$.  It is easily seen that
\begin{equation}
\label{compare model-a}
\left\{
\begin{split}
\frac{\partial I_1(t,x)}{\partial t}\leq & D_h\Delta I_1(t,x)-(d+\gamma_1)I_1(t,x)\\
&+\frac{c_1\beta(t,x)l(N(x)-I_1(t,x))}{pI_1(t,x)+l(N(x)-I_1(t,x))}I_{v1}(t,x),~&&t>0,~x \in \Omega,\\
\frac{\partial I_{v1}(t,x)}{\partial t}\leq & D_v\Delta I_{v1}(t,x)-\eta(t,x)I_{v1}(t,x)\\
&+\frac{\alpha_1\beta(t,x)pI_1(t,x)}{pI_1(t,x)+l(N(x)-I_1(t,x))}(M^*(t,x)-I_{v1}(t,x)),~&&t>0,~x \in \Omega,\\
\frac{\partial I_1(t,x)}{\partial \nu}=& \frac{\partial I_{v1}(t,x)}{\partial \nu}=0,~&&t>0,~x\in\partial\Omega.
\end{split}
\right.
\end{equation}
and
\begin{equation}
\label{compare model-b}
\left\{
\begin{split}
\frac{\partial I_2(t,x)}{\partial t}\leq & D_h\Delta I_2(t,x)-(d+\gamma_2)I_2(t,x)\\
&+\frac{c_2\beta(t,x)l(N(x)-I_2(t,x))}{pI_2(t,x)+l(N(x)-I_2(t,x))}I_{v2}(t,x),~&&t>0,~x \in \Omega,\\
\frac{\partial I_{v2}(t,x)}{\partial t}\leq & D_v\Delta I_{v2}(t,x)-\eta(t,x)I_{v2}(t,x)\\
&+\frac{\alpha_2\beta(t,x)pI_2(t,x)}{pI_2(t,x)+l(N(x)-I_2(t,x))}(M^*(t,x)-I_{v2}(t,x)),~&&t>0,~x \in \Omega,\\
\frac{\partial I_2(t,x)}{\partial \nu}=&\frac{\partial I_{v2}(t,x)}{\partial \nu}=0,~&&t>0,~x\in\partial\Omega.
\end{split}
\right.
\end{equation}
When $\mathcal{R}_1<1$, Theorem 3.1 implies that $(0, 0)$ is globally stable for \eqref{single-strain model}. Hence the comparison principle applies to \eqref{compare model-a} and ensures that
$$\lim_{t\rightarrow\infty}(I_1(t,x),I_{v1}(t,x))=(0,0)~\mbox{uniformly for}~x\in\bar{\Omega}.$$
In the case where $\mathcal{R}_2<1$, by using the similar procedure as above to \eqref{compare model-b}, one attains
$$\lim_{t\rightarrow\infty}(I_2(t,x),I_{v2}(t,x))=(0,0)~\mbox{uniformly for}~x\in\bar{\Omega}.$$
Therefore, the desired result is established.
\end{proof}

\subsection{Competitive exclusion and coexistence}

\begin{theorem}
\label{Competitive exclusion} Let $(I_1(t,\cdot,\varphi),I_{v1}(t,\cdot,\varphi),I_2(t,\cdot,\varphi),I_{v2}(t,\cdot,\varphi))$
be the solution of \eqref{model} through $\varphi\in X(0)$. Then the following assertions hold.
\begin{itemize}
  \item [(1)] If $\mathcal {R}_1>1>\mathcal {R}_2$ and $\varphi_1, \varphi_2\not\equiv 0$, then
  \[\lim_{t\rightarrow \infty} ((I_1(t,x,\varphi),I_{v1}(t,x,\varphi),I_2(t,x,\varphi),I_{v2}(t,x,\varphi))-E_1(t,x))=(0,0,0,0)\]
  uniformly for $x\in \bar{\Omega}$.
  \item [(2)] If $\mathcal {R}_2>1>\mathcal {R}_1$ and $\varphi_3, \varphi_4\not\equiv 0$, then
 \[\lim_{t\rightarrow \infty} ((I_1(t,x,\varphi),I_{v1}(t,x,\varphi),I_2(t,x,\varphi),I_{v2}(t,x,\varphi))-E_2(t,x))=(0,0,0,0)\]
  uniformly for $x\in \bar{\Omega}$.
\end{itemize}
\end{theorem}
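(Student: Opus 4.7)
Assertion (2) will follow from (1) by symmetric role reversal of the two strains, so the plan focuses on (1). My overall strategy is a two-stage competitive-exclusion argument: first, I would drive strain~2 to extinction by dominating its block in \eqref{model} by the single-strain subsystem \eqref{single-strain model} with $i=2$ and invoking Theorem \ref{single-strain solution}(i); second, I would view the $(I_1,I_{v1})$-block as an asymptotically periodic perturbation of \eqref{single-strain model} with $i=1$, whose dynamics is settled by Theorem \ref{single-strain solution}(ii), and apply internally chain-transitive (ICT) set machinery to deduce convergence to the periodic orbit $(I_1^*,I_{v1}^*)$.

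For stage one, the key monotonicity inputs are that on $\{I_1,I_2\geq 0,\;I_1+I_2\leq N(x)\}$ with $p\geq l$, elementary algebra (the cross-multiplied inequalities collapse respectively to $pN(x)I_1\geq 0$ and $(p-l)I_1\geq 0$) gives
\[
\frac{l(N(x)-I_1-I_2)}{p(I_1+I_2)+l(N(x)-I_1-I_2)}\leq\frac{l(N(x)-I_2)}{pI_2+l(N(x)-I_2)},\qquad \frac{pI_2}{p(I_1+I_2)+l(N(x)-I_1-I_2)}\leq\frac{pI_2}{pI_2+l(N(x)-I_2)},
\]
while $M^*(t,x)-I_{v1}-I_{v2}\leq M^*(t,x)-I_{v2}$ is automatic. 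These show that the strain-2 nonlinearity in \eqref{model} is pointwise dominated by that of the cooperative single-strain system \eqref{single-strain model} with $i=2$. The parabolic comparison principle then yields $(I_2,I_{v2})(t,\cdot,\varphi)\leq(\tilde I_2,\tilde I_{v2})(t,\cdot)$, where $(\tilde I_2,\tilde I_{v2})$ solves \eqref{single-strain model} ($i=2$) with initial data $(\varphi_3,\varphi_4)$; Theorem \ref{single-strain solution}(i) combined with $\mathcal{R}_2<1$ forces $(I_2,I_{v2})(t,\cdot,\varphi)\to(0,0)$ uniformly on $\bar\Omega$.

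For stage two, feeding this decay into the $(I_1,I_{v1})$-equations of \eqref{model} produces an asymptotically periodic system whose limit is precisely \eqref{single-strain model} with $i=1$; let $P_1$ denote its Poincar\'e map. By standard results on asymptotically periodic semiflows (Zhao's chain-transitivity lemmas), the $\omega$-limit set $\omega_\varphi$ of the discrete orbit $\{Q^n\varphi\}_{n\geq 0}$, projected onto the $(I_1,I_{v1})$-coordinates, is an ICT set for $P_1$. Theorem \ref{single-strain solution}(ii) asserts that the only ICT sets for $P_1$ are $\{(0,0)\}$ and $\{(I_1^*(0,\cdot),I_{v1}^*(0,\cdot))\}$, so $\omega_\varphi$ must equal one of these two.

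The hardest step will be excluding $\omega_\varphi=\{(0,0)\}$. Here I would exploit $\mathcal{R}_1>1$: by Lemma \ref{same sign R_i}, $r(P_1)>1$, so the linearization \eqref{linearizing the single-strain model} at $(0,0)$ has a positive principal Floquet exponent $\lambda>0$ with strongly positive eigenfunction $\phi$. Supposing for contradiction that $(I_1,I_{v1})(t,\cdot)\to 0$ uniformly, the established decay of $(I_2,I_{v2})$ together with the postulated decay of $(I_1,I_{v1})$ let me pick $t_0$ so large that on $[t_0,\infty)\times\bar\Omega$ the $(I_1,I_{v1})$-block of \eqref{model} dominates a linear cooperative system whose coefficients differ from those of \eqref{linearizing the single-strain model} by at most $\varepsilon$ in sup-norm, hence (by continuity of the principal Floquet exponent in the coefficients) still carries exponent $\geq \lambda/2>0$. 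By Lemma \ref{solution} with $\varphi_1,\varphi_2\not\equiv 0$, we have $I_1(t_0,\cdot),I_{v1}(t_0,\cdot)>0$ on $\bar\Omega$, so some $c>0$ satisfies $(I_1,I_{v1})(t_0,\cdot)\geq c\phi$; the cooperative comparison principle then drives a lower solution to unbounded exponential growth at rate $\lambda/2$, contradicting the assumed decay. Consequently $\omega_\varphi=\{(I_1^*(0,\cdot),I_{v1}^*(0,\cdot))\}$, and a routine upgrade from Poincar\'e-map convergence to continuous-time convergence (using the ultimate boundedness from Lemma \ref{solution and periodic semiflow} and the $\omega$-periodicity of $Q(t)$) yields the stated uniform convergence to $E_1(t,\cdot)$.
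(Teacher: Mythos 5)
Your proposal follows essentially the same route as the paper's proof: first drive $(I_2,I_{v2})$ to zero by comparison with the cooperative single-strain subsystem \eqref{single-strain model} ($i=2$) and Theorem \ref{single-strain solution}(i), then treat the $(I_1,I_{v1})$-block as an asymptotically periodic system and apply the theory of internally chain transitive sets together with Theorem \ref{single-strain solution}(ii). The only difference is one of detail: you explicitly verify the domination inequalities and carry out the Floquet-exponent argument excluding the trivial limit $\{(0,0)\}$ (the analogue of Lemma \ref{weak repeller} under $\mathcal{R}_1>1$ alone), steps the paper leaves implicit in its appeal to the chain-transitivity machinery.
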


\begin{proof} We only prove statement (1), since statement (2) can be treated similarly. In the case where $\mathcal {R}_2<1$, one immediately has that $\lim_{t\rightarrow \infty} (I_2(t,\cdot,\varphi),I_{v2}(t,\cdot,\varphi))=(0,0)$.
Then the limiting system of \eqref{model} is the system \eqref{single-strain model} with $i=1$.
Moreover, by employing the theory of internally chain transitive sets (see, e.g., \cite{Zhao 2017b}), we conclude that \[\lim_{t\rightarrow \infty} (I_1(t,x,\varphi),I_{12}(t,x,\varphi)-(I_1^*(t,x),I_{v1}^*(t,x))=(0,0)\]
uniformly for $x\in\bar{\Omega}$. Hence, statement (1) is established.
\end{proof}

For each $t\geq 0$, define
\[X_0(t):=\{(\varphi_1,\varphi_2,\varphi_3,\varphi_4)\in\ X(t): ~\varphi_i(\cdot)\not\equiv0,~i=1,2,3,4\},\]
and
\begin{align*}
\partial X_0(t):=X(t)\backslash X_0(t)=\{(\varphi_1,\varphi_2,\varphi_3,\varphi_4)\in\ X(t):~&\varphi_i(\cdot)\equiv0~\mbox{at least for one}~i\}.
\end{align*}
In order to study the coexistence of strains, we first give the following lemma for our subsequent coexistence result.

\begin{lemma}
\label{weak repeller}
Let $(I_1(t,\cdot,\varphi),I_{v1}(t,\cdot,\varphi),I_2(t,\cdot,\varphi),I_{v2}(t,\cdot,\varphi))$ be the solution of \eqref{model} with the initial value $\varphi\in X_0(0)$. If $\mathcal{R}_1>1$ and $\mathcal{R}_2>1$, then there exists $\delta>0$ such that
$$\limsup_{t\rightarrow\infty}\|(I_1(t,\cdot,\varphi),I_{v1}(t,\cdot,\varphi),I_2(t,\cdot,\varphi),I_{v2}(t,\cdot,\varphi))-E_0\|\geq\delta.$$
\end{lemma}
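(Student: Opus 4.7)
The plan is to argue by contradiction, reducing to strain $1$ alone and dominating the nonlinear flow from below by a small perturbation of the linearization \eqref{linearizing the single-strain model}. Suppose the conclusion fails, so that for every $\delta>0$ some $\varphi\in X_0(0)$ produces an orbit with $\limsup_{t\to\infty}\|u(t,\cdot,\varphi)-E_0\|<\delta$. Then there is $T=T(\delta)>0$ with $0\leq u_i(t,x)<\delta$ for all $t\geq T$, $x\in\bar{\Omega}$ and $i=1,\dots,4$. Meanwhile, Lemma~\ref{solution} ensures that every component is strictly positive on $\bar{\Omega}$ for $t>0$, so in particular $(I_1(T,\cdot,\varphi),I_{v1}(T,\cdot,\varphi))\gg 0$.

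For $t\geq T$ the vector-bias denominator satisfies $lN(x)\leq p(I_1+I_2)+l(N-I_1-I_2)=lN(x)+(p-l)(I_1+I_2)\leq lN(x)+2(p-l)\delta$, and the numerators obey $N(x)-I_1-I_2\geq\min_{\bar{\Omega}}N-2\delta$ and $M^*(t,x)-I_{v1}-I_{v2}\geq\min M^*-2\delta$. Substituting these bounds into the first and second equations of \eqref{model} yields
\begin{equation*}
\begin{split}
\partial_t I_1&\geq D_h\Delta I_1-(d+\gamma_1)I_1+a_1^\delta(t,x)\,I_{v1},\\
\partial_t I_{v1}&\geq D_v\Delta I_{v1}-\eta(t,x)I_{v1}+a_2^\delta(t,x)\,I_1,
\end{split}
\end{equation*}
under Neumann boundary conditions, where $a_1^\delta,a_2^\delta$ are nonnegative, H\"older continuous, $\omega$-periodic in $t$, and converge uniformly on $\mathbb{R}\times\bar{\Omega}$ as $\delta\to 0^+$ to $c_1\beta(t,x)$ and $\alpha_1\beta(t,x)pM^*(t,x)/(lN(x))$, respectively. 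Denote by $P_1^\delta$ the time-$\omega$ solution map of the corresponding cooperative linear system (replace $\geq$ by $=$).

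Because $P_1^\delta$ is compact and strongly positive on $\mathbb{E}$, the Krein-Rutman theorem gives that $r(P_1^\delta)$ is a simple principal eigenvalue with a strictly positive eigenfunction. Since $r(P_1^0)=r(P_1)>1$ by Lemma~\ref{same sign R_i} together with $\mathcal{R}_1>1$, the continuous dependence of the principal eigenvalue of cooperative periodic-parabolic systems on their coefficients (see, e.g., \cite{Liang et al. 2019}) provides $\delta_0>0$ small with $r:=r(P_1^{\delta_0})>1$. Let $\psi\gg 0$ be the associated eigenfunction, $P_1^{\delta_0}\psi=r\psi$. Choose the $\varphi$ and $T$ above with this $\delta_0$. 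Since $(I_1(T,\cdot,\varphi),I_{v1}(T,\cdot,\varphi))\gg 0$, there is $\sigma>0$ with $(I_1(T,\cdot,\varphi),I_{v1}(T,\cdot,\varphi))\geq\sigma\psi$. The parabolic comparison principle applied period-by-period to the cooperative sub-system then gives
\begin{equation*}
(I_1(T+n\omega,\cdot,\varphi),I_{v1}(T+n\omega,\cdot,\varphi))\geq\sigma r^n\psi\quad\text{for all }n\in\mathbb{N},
\end{equation*}
which blows up as $n\to\infty$, contradicting $I_1(t,x),I_{v1}(t,x)<\delta_0$ for $t\geq T$.

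The main obstacle is the continuous dependence of $r(P_1^\delta)$ on $\delta$: this underpins the entire argument, and requires strong positivity plus compactness of the Poincar\'e map together with a perturbation result for principal eigenvalues of periodic-parabolic cooperative systems. Once that is invoked, the remaining ingredients — positivity from Lemma~\ref{solution}, the elementary estimates on the vector-bias fractions, and the period map comparison — are routine.
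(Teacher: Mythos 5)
Your argument is correct and follows essentially the same route as the paper: argue by contradiction, dominate $(I_1,I_{v1})$ from below by a $\delta$-perturbed cooperative linear system whose Poincar\'{e} map $P_1^\delta$ has spectral radius tending to $r(P_1)>1$ as $\delta\to 0$ (via Lemma \ref{same sign R_i}), invoke Krein--Rutman and strict positivity from Lemma \ref{solution} to fit a positive eigenfunction under the solution, and conclude blow-up by comparison (the paper phrases the last step via the positive solution $e^{\mu^\delta t}\vartheta^\delta(t,x)$ rather than iterating the period map on the eigenfunction, but the two are equivalent). One small correction: the numerator bounds should be the pointwise ones $N(x)-2\delta$ and $M^*(t,x)-2\delta$ rather than $\min_{\bar{\Omega}}N-2\delta$ and $\min M^*-2\delta$, since only with the pointwise bounds do $a_1^\delta,a_2^\delta$ converge to the linearization coefficients $c_1\beta(t,x)$ and $\alpha_1\beta(t,x)pM^*(t,x)/(lN(x))$ as you claim; with the cruder constant bounds the limiting operator is smaller and its spectral radius need not exceed $1$.
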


\begin{proof}
Suppose, by contradiction, that there exists some $\psi\in X_0(0)$ such that
$$\limsup_{t\rightarrow\infty}\|(I_1(t,\cdot,\psi),I_{v1}(t,\cdot,\psi),I_2(t,\cdot,\psi),I_{v2}(t,\cdot,\psi))-E_0\|<\delta.$$
Then there exists a $t_1>0$ such that $$0<I_i(t,x,\psi)<\delta,~~~0<I_{vi}(t,x,\psi)<\delta,~~~i=1,2$$
for all $t\geq t_1$ and $x\in\bar{\Omega}$. Then $I_1(t,\cdot,\psi)$ and $I_{v1}(t,\cdot,\psi)$ satisfy
\begin{equation*}
\left\{
\begin{split}
\frac{\partial I_1(t,x)}{\partial t}\geq &D_h\Delta I_1(t,x)-(d+\gamma_1)I_1(t,x)\\
&+\frac{c_1\beta(t,x)l(N(x)-2\delta)}{2p\delta+lN(x)}I_{v1}(t,x),~~~&&t\geq t_1,~x \in \Omega,\\
\frac{\partial I_{v1}(t,x)}{\partial t}\geq &D_v\Delta I_{v1}(t,x)-\eta(t,x)I_{v1}(t,x)\\
&+\frac{\alpha_1\beta(t,x)p(M^*(t,x)-2\delta)}{2p\delta+lN(x)}I_1(t,x),~~~&&t\geq t_1,~x \in \Omega,\\
\frac{\partial I_1(t,x)}{\partial \nu}=&\frac{\partial I_{v1}(t,x)}{\partial \nu}=0,~~~&&t\geq t_1,~x\in\partial\Omega.\\
\end{split}
\right.
\end{equation*}
Let $P_1^\delta: X(0)\rightarrow X(0)$ be the Poincar\'{e} map associated with the following system:
\begin{equation}
\label{compare 2}
\left\{
\begin{split}
\frac{\partial I_1(t,x)}{\partial t}=&D_h\Delta I_1(t,x)-(d+\gamma_1)I_1(t,x)\\
&+\frac{c_1\beta(t,x)l(N(x)-2\delta)}{2p\delta+lN(x)}I_{v1}(t,x),~~~&&t\geq 0,~x \in \Omega,\\
\frac{\partial I_{v1}(t,x)}{\partial t}=&D_v\Delta I_{v1}(t,x)-\eta(t,x)I_{v1}(t,x)\\
&+\frac{\alpha_1\beta(t,x)p(M^*(t,x)-2\delta)}{2p\delta+lN(x)}I_1(t,x),~~~&&t\geq 0,~x \in \Omega,\\
\frac{\partial I_1(t,x)}{\partial \nu}=&\frac{\partial I_{v1}(t,x)}{\partial \nu}=0,~~~&&t\geq 0,~x\in\partial\Omega.
\end{split}
\right.
\end{equation}
In view of Lemma \ref{same sign R_i}, we have that $\mathcal{R}_1>1$ is equivalent to $r(P_1)>1$. By continuity, we see that $\lim_{\delta\rightarrow 0}r(P_1^\delta)=r(P_1)>1$. Thus, we can fix a sufficiently small number $\delta>0$ such that
$$0<\delta<\min\{\min_{(t,x)\in[0,\omega]\times\bar{\Omega}}\frac{M^*(t,x)}{2},~\min_{x\in\bar{\Omega}}\frac{N(x)}{2}\}~~\mbox{and}~~r(P_1^\delta)>1.$$
Since $P_1^\delta$ is compact and strongly positive on $E(0)$, then Krein-Rutman Theorem implies that $r(P_1^\delta)$ is a simple eigenvalue of $P_1^\delta$ having a strongly positive eigenvector.
It then follows from \cite[Theorem 2.16 and Remark 2.20]{Liang et al. 2017} that there is a positive $\omega$-periodic function $\vartheta^\delta(t,x)$ such that $e^{\mu^\delta t}\vartheta^\delta(t,x)$ is a positive solution of \eqref{compare 2}, where $\mu^\delta=\frac{\ln r(P_1^\delta)}{\omega}>0$.
From Lemma \ref{solution}, we know  that
\[I_1(t,x,\psi)>0~~\mbox{and}~~ I_{v1}(t,x,\psi)> 0,~~~\forall t>0, x\in \bar{\Omega}.\]
Thus, we may choose a $c>0$ such that
\[(I_1(t_1,x,\psi),I_{v1}(t_1,x,\psi))\geq c e^{\mu^\delta t_1}\vartheta^\delta(t_1,x),~~~\forall x\in \bar{\Omega}.\] A simple comparison leads to
$$(I_1(t,\cdot,\psi),I_{v1}(t,\cdot,\psi))\geq ce^{\mu^\delta t}\vartheta^\delta(t,x),~~~\forall t\geq t_1,~x\in\bar{\Omega},$$
Since $\mu^\delta>0$, it follows that
$$\lim_{t\rightarrow\infty}I_1(t,x,\psi)=\infty,~\lim_{t\rightarrow\infty}I_{v1}(t,x,\psi)=\infty~~{\rm uniformly~for}~x\in\bar{\Omega}.$$
By performing a similar analysis on $(I_2(t,x,\psi),I_{v2}(t,x,\psi))$, when $\mathcal{R}_2>1$,
$$\lim_{t\rightarrow\infty}I_2(t,x,\psi)=\infty,~\lim_{t\rightarrow\infty}I_{v2}(t,x,\psi)=\infty~~{\rm uniformly~for}~x\in\bar{\Omega}.$$
This contradicts the boundedness of $I_i$ and $I_{vi}$, $i=1,2$.
\end{proof}

\begin{theorem}
\label{uniformly persistent}
Suppose that $\mathcal{R}_1>1,~\mathcal{R}_2>1,~\hat{\mathcal{R}}_1>1,$ and $\hat{\mathcal{R}}_2>1$, then system \eqref{model} admits at least one positive $\omega$-periodic solution, and there exists a constant $\delta^*>0$ such that for any $\varphi\in X_0(0)$, we have
$$\liminf_{t\rightarrow \infty}\min_{x\in\bar{\Omega}}I_i(t,x)\geq\delta^* ,~~~\liminf_{t\rightarrow \infty}\min_{x\in\bar{\Omega}}I_{vi}(t,x)\geq\delta^*,~~~i=1,2.$$
\end{theorem}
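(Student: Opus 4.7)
The plan is to establish uniform persistence of the Poincar\'e map $Q:=Q(\omega)$ via the acyclicity machinery of \cite{Magal and Zhao 2005}. By Lemma \ref{solution and periodic semiflow}, $Q$ has a strong global attractor, and by Lemma \ref{solution} the set $X_0(0)$ is positively invariant under $Q$. What remains is to characterize the boundary dynamics and verify an acyclic, isolated, uniformly weakly repelling covering of the boundary attractor.

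First I would pin down the set
\[
M_\partial := \{\varphi \in \partial X_0(0) : Q^n\varphi \in \partial X_0(0),~\forall n \geq 0\}.
\]
Using Lemma \ref{solution} and the strong maximum principle, if $\varphi_1 \equiv 0$ while $\varphi_2 \not\equiv 0$, then the source term $J_1 I_{v1}$ instantly lifts $I_1$ into the positive cone, so the orbit leaves $\partial X_0(0)$; iterating this observation for each coordinate forces $M_\partial$ to reduce to the two extinction slices $\{I_1 \equiv I_{v1} \equiv 0\}$ and $\{I_2 \equiv I_{v2} \equiv 0\}$. On each slice, Theorem \ref{single-strain solution} identifies the possible $\omega$-limit sets as $M_0 := \{E_0\}$, $M_1 := \{E_1(0,\cdot)\}$, and $M_2 := \{E_2(0,\cdot)\}$. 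Acyclicity then follows because Theorem \ref{single-strain solution} forces the only heteroclinic connections within $M_\partial$ to be $M_0 \to M_1$ and $M_0 \to M_2$, neither of which closes into a loop.

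Next I would show each $M_k$ is isolated and uniformly weakly repelling for orbits in $X_0(0)$. For $M_0$ this is precisely Lemma \ref{weak repeller}. For $M_1$ I plan to argue by contradiction: suppose some $\varphi \in X_0(0)$ satisfies $Q^n\varphi \to E_1(0,\cdot)$. Then for any $\eta>0$ and all large $t$, the $(I_2,I_{v2})$-equations form an $\eta$-small perturbation of the linearization \eqref{linearizing boundary} with $j=1$. Writing $\hat{P}_2^\eta$ for the Poincar\'e map of this perturbed non-autonomous linear system and appealing to the continuity of its spectral radius (via \cite[Theorem 2.16 and Remark 2.20]{Liang et al. 2017}), I would conclude $r(\hat{P}_2^\eta)>1$ for small $\eta$, since $\hat{\mathcal{R}}_2>1$ gives $r(\hat{P}_2)>1$ by Lemma \ref{same sign i}. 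A strongly positive Floquet eigenfunction then forces $(I_2,I_{v2})$ to grow exponentially, contradicting convergence to $0$. The case of $M_2$ is handled symmetrically, using $\hat{\mathcal{R}}_1>1$.

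With acyclicity and uniform weak repulsion in hand, \cite[Theorem 3.7]{Magal and Zhao 2005} yields uniform persistence of $Q$ with respect to $(X_0(0),\partial X_0(0))$, and then a fixed point $\bar\varphi \in X_0(0)$ of $Q$, hence a positive $\omega$-periodic solution of \eqref{model}, is produced by \cite[Theorem 4.7]{Magal and Zhao 2005} together with the global attractor property. The pointwise lower bound $\liminf_{t\to\infty}\min_{x\in\bar\Omega} I_i(t,x),\,I_{vi}(t,x) \geq \delta^*$ is then standard, combining the abstract persistence bound with parabolic smoothing and the strong maximum principle. I expect Step~3 to be the main obstacle: the spectral-perturbation argument for $M_1$ and $M_2$ requires careful use of Floquet theory for non-autonomous linear parabolic systems in the spirit of \cite{Liang et al. 2017}, and the stability of strong positivity of the principal eigenfunction under the $\eta$-perturbation is the delicate ingredient that transports the invasion inequality $\hat{\mathcal{R}}_i>1$ into exponential instability of the nonlinear flow. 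A secondary delicacy, flagged explicitly by the authors, is the characterization of $M_\partial$ itself: one must rule out ``mixed'' boundary trajectories in which a coordinate vanishes initially but is immediately activated by the coupling.
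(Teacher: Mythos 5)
Your proposal is correct and follows essentially the same route as the paper: characterize $M_\partial$ as the union of the two single-strain extinction slices by activating any vanishing coordinate through the coupling, identify $\{M_0,M_1,M_2\}$ as the boundary attractors via Theorem \ref{single-strain solution}, use Lemma \ref{weak repeller} for $M_0$ and an $\varepsilon$-perturbed linearization at $E_j$ with a positive Floquet solution (driven by $\hat{\mathcal{R}}_i>1$ through Lemma \ref{same sign i}) for $M_1,M_2$, then invoke the acyclicity persistence theorem and a fixed-point/compactness argument for the periodic solution and the pointwise lower bound. The only cosmetic difference is that the paper certifies acyclicity by proving $M_1,M_2$ are locally Lyapunov stable and $M_0$ unstable within $M_\partial$ using the monotonicity of the single-strain subsystems, whereas you read the absence of cycles directly off the global convergence in Theorem \ref{single-strain solution}; these amount to the same thing.
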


\begin{proof}
For any $\varphi\in X_0(0)$, by Lemma 2.2, we have
$$I_i(t,x,\varphi)>0,~I_{vi}(t,x,\varphi)>0,~~~i=1,2,~\forall t>0,~x\in\bar{\Omega}.$$
Thus, $Q^n(X_0(0))\subset X_0(0),~\forall n\in\mathbb{N}$. Furthermore, $Q$ admits a global attractor on $X(0)$.

Next we prove that $Q$ is uniformly persistent with respect to $(X_0(0), \partial X_0(0))$.
Recalling the definitions of $E_0, E_1(t,x), E_2(t,x)$ in Section \ref{Section 3.2}, we let
\[M_0=E_0, ~~M_1=E_1(0,\cdot),~~M_2=E_2(0,\cdot).\]
Then we have the following claims.

{\bf Claim 1.} There is a $\delta_1>0$ such that
$$\limsup_{n\rightarrow\infty}\|Q^n(\varphi)-M_0\|\geq\delta_1,~~~\forall\varphi\in X_0(0).$$
This claim directly follows from Lemma \ref{weak repeller}.

Consider an auxiliary system with parameter $\varepsilon$:
\begin{equation}
\label{compare 3}
\left\{
\begin{split}
\frac{\partial I_2(t,x)}{\partial t}=&D_h\Delta I_2(t,x)-(d+\gamma_2)I_2(t,x)\\
&+\frac{c_2\beta(t,x)l(N(x)-I_1^*(t,x)-2\varepsilon)}{p(I_1^*(t,x)+2\varepsilon)+l(N(x)-I_1^*(t,x)-2\varepsilon)}I_{v2}(t,x),\\
\frac{\partial I_{v2}(t,x)}{\partial t}=&D_v\Delta I_{v2}(t,x)-\eta(t,x)I_{v2}(t,x)\\
&+\frac{\alpha_2\beta(t,x)p(M^*(t,x)-I_{v1}^*(t,x)-2\varepsilon)}{p(I_1^*(t,x)+2\varepsilon)+l(N(x)-I_1^*(t,x)-2\varepsilon)}I_2(t,x),\\
\frac{\partial I_2(t,x)}{\partial \nu}=&\frac{\partial I_{v2}(t,x)}{\partial \nu}=0,\\
\end{split}
\right.
\end{equation}
for all $t>0$. Let $\hat{P}_2^\varepsilon:=\hat{P}_2^\varepsilon(\omega)$ be the Poincar\'{e} map of \eqref{compare 3}.  Since $\lim_{\varepsilon\rightarrow 0}r(\hat{P}_2^\varepsilon)=r(\hat{P}_2)>1$, we can fix a small number $\varepsilon>0$ such that $r(\hat{P}_2^\varepsilon)>1$.  As discussed in Lemma \ref{weak repeller}, there is a positive $\omega$-periodic function $\vartheta^\varepsilon(t,x)$ such that $e^{\mu^\varepsilon t}\vartheta^\varepsilon(t,x)$ is a positive solution of \eqref{compare 3}, where $\mu^\varepsilon=\frac{\ln r(\hat{P}_2^\varepsilon)}{\omega}>0$. For $\varepsilon>0$ above, by the continuous dependence of solutions on the initial value, there exists $\delta_2=\delta_2(\varepsilon)>0$ such that for all $\varphi\in X_0(0)$ with $\|\varphi-M_1\|\leq\delta_2$, we have
$\|Q(t)\varphi-Q(t)M_1\|<\varepsilon,~\forall t\in[0,\omega]$.

{\bf Claim 2.} $\limsup_{n\rightarrow\infty}\|Q^n(\varphi)-M_1\|\geq\delta_2,~\forall\varphi\in X_0(0).$

Suppose the claim is false, then $\limsup_{n\rightarrow\infty}\|Q^n(\psi)-M_1\|<\delta_2$ for some $\psi\in X_0(0)$. Then there exists an integer $N_1\geq1$ such that $\|Q^n(\psi)-M_1\|<\delta_2$ for $n\geq N_1$. For any $t\geq N_1\omega$, letting $t=n\omega+\tilde{t}$ with $n=[t/\omega]$ and $\tilde{t}\in[0,\omega)$, we have
\begin{equation*}
\label{3}
\|Q(t)\psi-Q(t)M_1\|=\|Q(\tilde{t})(Q^n(\psi))-Q(\tilde{t})M_1\|<\varepsilon.
\end{equation*}
According to the above inequality and Lemma \ref{solution}, we infer that
\begin{align*}
& 0<I_1(t,x,\psi)<I_1^*(t,x,\psi)+\varepsilon,~0<I_{v1}(t,x,\psi)<I_{v1}^*(t,x,\psi)+\varepsilon,\\
& 0<I_2(t,x,\psi)<\varepsilon,~0<I_{v2}(t,x,\psi)<\varepsilon,~~~\forall t\geq N_1\omega,~x\in\bar{\Omega}.
\end{align*}
As a result, $I_2(t,x,\psi)$ and $I_{v2}(t,x,\psi)$ satisfy
\begin{equation}
\label{compare 4}
\left\{
\begin{split}
\frac{\partial I_2(t,x)}{\partial t}\geq &D_h\Delta I_2(t,x)-(d+\gamma_2)I_2(t,x)\\
&+\frac{c_2\beta(t,x)l(N(x)-I_1^*(t,x)-2\varepsilon)}{p(I_1^*(t,x)+2\varepsilon)+l(N(x)-I_1^*(t,x)-2\varepsilon)}I_{v2}(t,x),\\
\frac{\partial I_{v2}(t,x)}{\partial t}\geq &D_v\Delta I_{v2}(t,x)-\eta(t,x)I_{v2}(t,x)\\
&+\frac{\alpha_2\beta(t,x)p(M^*(t,x)-I_{v1}^*(t,x)-2\varepsilon)}{p(I_1^*(t,x)+2\varepsilon)+l(N(x)-I_1^*(t,x)-2\varepsilon)}I_2(t,x),\\
\frac{\partial I_2(t,x)}{\partial \nu}=&\frac{\partial I_{v2}(t,x)}{\partial \nu}=0,\\
\end{split}
\right.
\end{equation}
for all $t\geq N_1\omega$. Since $\psi\in X_0(0)$, $I_2(t,x,\psi)>0$ and $I_{v2}(t,x,\psi)>0$ for all $t\geq 0$ and $x\in \bar{\Omega}$, there exists a $k>0$ such that  \[(I_2(N_1\omega,x,\psi),I_{v2}(N_1\omega,x,\psi))\geq k e^{\mu^\varepsilon N_1\omega}\vartheta^\varepsilon(N_1\omega,x),~~~x\in\bar{\Omega}.\]
An application of the comparison theorem to \eqref{compare 4} yields
$$(I_2(t,x,\psi),I_{v2}(t,x,\psi))\geq k e^{\mu^\varepsilon t}\vartheta^\varepsilon(t,x),~~~\forall t\geq N_1\omega,~x\in\bar{\Omega}.$$
Since $\mu^\varepsilon>0$, one sees that $I_2(t,\cdot,\psi)\rightarrow\infty,~I_{v2}(t,\cdot,\psi)\rightarrow\infty$ as $t\rightarrow\infty$. This gives rise to a contradiction, and thereby, the above claim is true.

In a similar way, we can prove the following claim.

{\bf Claim 3.} There exists a $\delta_3>0$ such that
$$\limsup_{n\rightarrow\infty}\|Q^n(\varphi)-M_2\|\geq\delta_3,~\forall\varphi\in X_0(0).$$

With the above three claims, we see that $M_0, M_1$ and $M_2$ are isolated invariant sets for $Q$ in $X(0)$ and $W^s(M_i)\bigcap X_0(0)=\emptyset, i=0,1,2$, where $W^s(M_i)$ is the stable set of $M_i$ for $Q$. Set
$$M_\partial:=\{\varphi\in\partial X_0(0):~Q^n(\varphi)\in\partial X_0(0),~\forall n\in\mathbb{N}\}.$$
We now show that $M_\partial=\mathcal {M}_0$, where
\[\mathcal {M}_0:=\{\varphi\in\partial X_0(0):~\varphi_1(\cdot)=\varphi_2(\cdot)\equiv 0~\mbox{or}~\varphi_3(\cdot)=\varphi_4(\cdot)\equiv 0\}.\]
Obviously, it suffices to prove $M_\partial \subset \mathcal {M}_0$.
For any given $\varphi\in M_\partial$, we have $I_1(n\omega,\cdot,\varphi)\equiv 0$ or $I_{v1}(n\omega,\cdot,\varphi)\equiv 0$ or $I_2(n\omega,\cdot,\varphi)\equiv 0$ or $I_{v2}(n\omega,\cdot,\varphi)\equiv 0, \forall n\in\mathbb{N}$. Assume that $\varphi\notin \mathcal {M}_0$, then there are eight possibilities as below:
\begin{itemize}
\item [(i)] $\varphi_1(\cdot)=I_1(0,\cdot,\varphi)\equiv 0,~\varphi_2(\cdot)=I_{v1}(0,\cdot,\varphi)>0,~\varphi_3(\cdot)=I_2(0,\cdot,\varphi)\equiv 0,~\varphi_4(\cdot)=I_{v2}(0,\cdot,\varphi)>0$.
\item[(ii)]  $\varphi_1(\cdot)=I_1(0,\cdot,\varphi)\equiv 0,~\varphi_2(\cdot)=I_{v1}(0,\cdot,\varphi)>0,~\varphi_3(\cdot)=I_2(0,\cdot,\varphi)>0,~\varphi_4(\cdot)=I_{v2}(0,\cdot,\varphi)\equiv 0$.
\item [(iii)]  $\varphi_1(\cdot)=I_1(0,\cdot,\varphi)\equiv 0,~\varphi_2(\cdot)=I_{v1}(0,\cdot,\varphi)>0,~\varphi_3(\cdot)=I_2(0,\cdot,\varphi)>0,~\varphi_4(\cdot)=I_{v2}(0,\cdot,\varphi)>0$.

\item[(iv)] $\varphi_1(\cdot)=I_1(0,\cdot,\varphi)>0,~\varphi_2(\cdot)=I_{v1}(0,\cdot,\varphi)\equiv 0,~\varphi_3(\cdot)=I_2(0,\cdot,\varphi)\equiv 0,~\varphi_4(\cdot)=I_{v2}(0,\cdot,\varphi)>0$.

\item[(v)]$\varphi_1(\cdot)=I_1(0,\cdot,\varphi)>0,~\varphi_2(\cdot)=I_{v1}(0,\cdot,\varphi)\equiv 0,~\varphi_3(\cdot)=I_2(0,\cdot,\varphi)>0,~\varphi_4(\cdot)=I_{v2}(0,\cdot,\varphi)\equiv 0$.

\item [(vi)] $\varphi_1(\cdot)=I_1(0,\cdot,\varphi)>0,~\varphi_2(\cdot)=I_{v1}(0,\cdot,\varphi)\equiv 0,~\varphi_3(\cdot)=I_2(0,\cdot,\varphi)>0,~\varphi_4(\cdot)=I_{v2}(0,\cdot,\varphi)>0$.

\item [(vii)]$\varphi_1(\cdot)=I_1(0,\cdot,\varphi)>0,~\varphi_2(\cdot)=I_{v1}(0,\cdot,\varphi)>0,~\varphi_3(\cdot)=I_2(0,\cdot,\varphi)\equiv 0,~\varphi_4(\cdot)=I_{v2}(0,\cdot,\varphi)>0$.

\item [(viii)]    $\varphi_1(\cdot)=I_1(0,\cdot,\varphi)>0,~\varphi_2(\cdot)=I_{v1}(0,\cdot,\varphi)>0,~\varphi_3(\cdot)=I_2(0,\cdot,\varphi)>0,~\varphi_4(\cdot)=I_{v2}(0,\cdot,\varphi)\equiv 0$.
\end{itemize}
By Lemma 2.2, in case (i), we obtain that $I_{v1}(t,x,\varphi)>0, I_{v2}(t,x,\varphi)>0$ for all $ t>0$ and $x\in\bar{\Omega}$. Further, using the first and third equation of \eqref{model}, one obtains that $I_1(t,x,\varphi)>0,~I_2(t,x,\varphi)>0, \forall t>0, x\in \bar{\Omega}$, which contradicts with the fact $\varphi\in M_\partial$.
By performing a similar analysis, we can show that (ii)-(viii) are impossible. Hence,
$\varphi\in \mathcal {M}_0$, and hence, $M_\partial\subset \mathcal {M}_0$. This proves $M_\partial=\mathcal {M}_0$.

Let $\omega(\varphi)$ be the omega limit set of the forward orbit $\gamma^+(\varphi):=\{Q^n(\varphi):~\forall n\in\mathbb{N}\}$. We further have the following claims.

{\bf Claim 4.} $\cup_{\varphi\in M_\partial}\omega(\varphi)\subset \{M_0, M_1, M_2\}$.

Obviously, there are three possibilities for $\varphi\in M_\partial=\mathcal {M}_0:$
\begin{align*}
& \mbox{\bf Case 1}: ~~\varphi_1(\cdot)=\varphi_2(\cdot)\equiv0,~~ \varphi_3(\cdot)>0~\mbox{or}~\varphi_4(\cdot)> 0;\\
& \mbox{\bf Case 2}: ~~\varphi_3(\cdot)=\varphi_4(\cdot)\equiv0,~~ \varphi_1(\cdot)> 0~\mbox{or}~\varphi_2(\cdot)> 0; \\
& \mbox{\bf Case 3}: ~~\varphi_1(\cdot)=\varphi_2(\cdot)=\varphi_3(\cdot)=\varphi_4(\cdot)\equiv0.
\end{align*}
In what follows, we aim to show that claim 4 holds for each of the above three cases.

If {\bf Case 1} happens, then $I_1(t,x,\varphi)=I_{v1}(t,x,\varphi)\equiv 0$ for all $t\geq 0$ and $x\in\bar{\Omega}$. In view of system \eqref{model}, $I_2(t,x,\varphi),I_{v2}(t,x,\varphi)$ satisfy system \eqref{single-strain model} with $i=2, j=1$. Since $\mathcal{R}_2>1$, it follows from Theorem 3.1 that \[\lim_{t\rightarrow\infty} \big(I_2(t,x,\varphi)-I_2^*(t,x)\big)=0, ~~ \lim_{t\rightarrow\infty}\big(I_{v2}(t,x,\varphi)-I_{v2}^*(t,x)\big)=0\] uniformly for $x\in\bar{\Omega}$. Hence, $\omega(\varphi)=M_2$ for any $\varphi\in M_\partial$.

For {\bf Case 2}, by repeating arguments similar to {\bf Case 1},
we can show that $\omega(\varphi)=M_1$ for any $\varphi\in M_\partial$. For {\bf Case 3}, one immediately finds that
$$(I_1(t,x,\varphi),I_{v1}(t,x,\varphi),I_2(t,x,\varphi),I_{v2}(t,x,\varphi))=(0,0,0,0),~~~\forall t\geq 0, x\in \bar{\Omega}.$$
This implies that $\omega(\varphi)=M_0$ for any $\varphi\in M_\partial$. Thus claim 4 is obtained.

{\bf Claim 5.} $M_1$ and $M_2$ are locally stable, and $M_0$ is unstable for $Q$ in $M_\partial$.

Suppose that $\varphi\in M_\partial$, we have that $M_\partial=M_\partial^1 \cup M_\partial^2$, where
\[M_\partial^1:=\{\varphi\in \partial X_0(0):~\varphi_1(\cdot)=\varphi_2(\cdot)\equiv0\},~~M_\partial^2:=\{\varphi\in \partial X_0(0):~\varphi_3(\cdot)=\varphi_4(\cdot)\equiv0\}.\]
If $\varphi\in M_\partial^1$, then system \eqref{model} restricted on $M_\partial^1$ is a monotone system. Thus, $M_2$ is locally Lyapunov stable for $Q$ in $M_\partial^1$ due to \cite[Lemma 2.2.1]{Zhao 2017b}, and $M_0$ is unstable in $M_\partial^1$. In a similar manner, if $\varphi\in M_\partial^2$, we can prove that
$M_1$ is locally Lyapunov stable for $Q$ in $M_\partial^2$, and $M_0$ is unstable in $M_\partial^2$.

The claim 5 implies that no subset of $\{M_0,M_1,M_2\}$ forms a cycle in $\partial X_0(0)$. Based on the above analysis, it follows from the acyclicity theorem on uniform persistence for maps \cite[Theorem 1.3.1 and Remark 1.3.1]{Zhao 2017b} that $Q: X(0)\rightarrow X(0)$ is uniformly persistent with respect to $(X_0(0),\partial X_0(0))$ in the sense that there exists $\tilde{\delta}>0$ such that
$$\liminf_{n\rightarrow\infty}d(Q^n(\varphi),\partial X_0(0))\geq\tilde{\delta},~~~\forall \varphi\in X_0(0).$$
By \cite[Theorem 4.5]{Magal and Zhao 2005} with $\rho(\phi)=d(\phi,\partial X_0(0))$, $Q$ admits a global attractor $A_0$ in $X_0(0)$, and $Q$ has a fixed point in $\varphi^*\in A_0$. Clearly, $u(t,\cdot,\varphi^*)$ is an $\omega$-periodic solution of \eqref{model} and it is strictly positive due to Lemma \ref{solution}.

Finally, we use the arguments in \cite[Section 11.2]{Zhao 2017b} to obtain the practical uniform persistence. Since $A_0=Q(\omega)A_0=Q(A_0)$, we have $\varphi_i>0, i=1,2,3,4, \forall \varphi\in A_0$. Let $B_0:=\cup_{t\in[0,\omega]}Q(t)A_0$. Then \cite[Theorem 3.1.1]{Zhao 2017b} implies that $B_0\subset X_0(0)$, and $\lim_{t\rightarrow\infty}d(Q(t)\varphi,B_0)=0$ for all $\varphi\in X_0(0).$ Define a continuous function $p: X(0)\rightarrow\mathbb{R}_+$ by
$$p(\varphi):=\min\{\min_{x\in\bar{\Omega}}\varphi_1(x),~\min_{x\in\bar{\Omega}}\varphi_2(x),\min_{x\in\bar{\Omega}}\varphi_3(x),\min_{x\in\bar{\Omega}}\varphi_4(x)\},~\forall\varphi\in X(0).$$
Since $B_0$ is compact, it follows that $\inf_{\varphi\in B_0}p(\varphi)=\min_{\varphi\in B_0}p(\varphi)>0$. Therefore, there exists a $\delta^*>0$ such that for any $\varphi\in X_0(0)$,
\begin{align*}
& \liminf_{t\rightarrow\infty}{\rm min}_{x\in\bar{\Omega}}I_i(t,x,\varphi)\geq\delta^*,~~~\liminf_{t\rightarrow\infty}{\rm min}_{x\in\bar{\Omega}}I_{vi}(t,x,\varphi)\geq\delta^*,~~~i=1,2.
\end{align*}
The proof is complete.
\end{proof}

\section{Asymptotic behavior of $\mathcal{R}_0$}

In this section, we use the recent theory developed in \cite{Zhang and Zhao 2021} to
study the asymptotic behavior of the basic reproduction number
as the diffusion coefficients go to zero and infinity. To do this, we write $$\mathcal{R}_0(D_h,D_v)=\max\{\mathcal{R}_1(D_h,D_v),\mathcal{R}_2(D_h,D_v)\}.$$
Observe that for each $x\in\bar{\Omega}$, the equation
$$\frac{\partial M(t,x)}{\partial t}=\Lambda(t,x)-\eta(t,x)M(t,x)$$
admits a globally stable positive $\omega$-periodic solution $M_0(t,x)$, and it is continuous on $\R\times \bar{\Omega}$. Define $\widetilde{g}(t):=|\Omega|^{-1}\int_{\Omega}g(t,x)dx$. One immediately sees that the following scalar periodic equation
$$\frac{\partial M(t,x)}{\partial t}=\widetilde{\Lambda}(t)-\widetilde{\eta}(t)M(t,x)$$
has a unique positive $\omega$-periodic solution
\[\widetilde{M}_{\infty}(t)=\left[\int_0^t \widetilde{\Lambda}(s)e^{\int_0^s \widetilde{\eta}(\xi)d\xi}ds+
\frac{ \int_0^\omega \widetilde{\Lambda}(s)e^{\int_0^s\widetilde{\eta}(\xi)d\xi}ds}{e^{\int_0^\omega \widetilde{\eta}(s)ds}-1}\right] e^{-\int_0^t \widetilde{\eta}(s)ds},\]
which is globally asymptotically stable.

It is easy to verify that assumptions (H1)-(H5) in \cite{Zhang and Zhao 2021} are valid.
An direct application of \cite[Theorems 5.2 and 5.5]{Zhang and Zhao 2021} leads to
$$\lim_{D_v\rightarrow 0}\|M^*(t,\cdot)-M_0(t,\cdot)\|_\mathbb{Y}=0,~~\lim_{D_v\rightarrow \infty}\|M^*(t,\cdot)-\widetilde{M}_\infty(t)\|_\mathbb{Y}=0$$
hold uniformly on $t\in\R$. For each $x\in\bar{\Omega}$, let $\{\Gamma^i_{x,0}(t,s):t\geq s\}~(i=1,2)$ be the evolution family on $\mathbb{R}^2$ associated with the following system:
\begin{equation*}
\label{without diffusion 1}
\left\{
\begin{split}
\frac{\partial I_i(t,x)}{\partial t}&=-(d+\gamma_i)I_i(t,x),~~~&&t>s,~x\in\bar{\Omega},\\
\frac{\partial I_{vi}(t,x)}{\partial t}&=-\eta(t,x)I_{vi}(t,x),~~~&&t>s,~x\in\bar{\Omega},
\end{split}
\right.
\end{equation*}
and define
$$
F^i_0(t,x)
\left(
\begin{array}{c}
\psi_{1}\\
\psi_{2}
\end{array}
\right)
=
\left(
\begin{array}{c}
c_i\beta(t,x)\psi_2\\
\frac{\alpha_i\beta(t,x)pM_0(t,x)}{lN(x)}\psi_1
\end{array}
\right),~\forall t\in\mathbb{R},~x\in\bar{\Omega},~\psi=(\psi_1,~\psi_2)\in\mathbb{R}^2.
$$
Let $\{\widetilde{\Gamma}^i_{\infty}(t,s):t\geq s\}~(i=1,2)$ be the evolution family on $\mathbb{R}^2$ of the following system:
\begin{equation*}
\label{without diffusion 2}
\left\{
\begin{split}
\frac{\partial I_i(t,x)}{\partial t}&=-(d+\gamma_i)I_i(t,x),~~~&& t>s,~x\in\bar{\Omega},\\
\frac{\partial I_{v2}(t,x)}{\partial t}&=-\widetilde{\eta}(t)I_{vi}(t,x),~~~&&t>s, ~x\in\bar{\Omega},
\end{split}
\right.
\end{equation*}
and define
$$
\widetilde{F}^i_{\infty}(t)
\left(
\begin{array}{c}
\psi_{1}\\
\psi_{2}
\end{array}
\right)
=
\left(
\begin{array}{c}
c_i\widetilde{\beta}(t)\psi_2\\
\widetilde{f}^i_{21}(t)\widetilde{M}_\infty(t)\psi_1
\end{array}
\right),~~~\forall t\in\mathbb{R},~\psi=(\psi_1,~\psi_2)\in\mathbb{R}^2,
$$
where
$$\widetilde{f}^i_{21}(t):=|\Omega|^{-1}\int_{\Omega}\frac{\alpha_i\beta(t,x)p}{lN(x)}dx,~~~\forall t\in\mathbb{R}.$$

Let $C_\omega(\mathbb{R},\mathbb{R}^2)$ be the Banach space of all continuous and $\omega$-periodic functions from $\mathbb{R}$ to $\mathbb{R}^2$, which is endowed with the maximum norm. For each $x\in\bar{\Omega}$, we respectively define bounded linear positive operators $L^i_{x,0}$ and $\widetilde{L}^i_{\infty}, i=1,2$, on $C_\omega(\mathbb{R},\mathbb{R}^2)$ by
$$[L^i_{x,0}v](t):=\int^{\infty}_{0}\Gamma^i_{x,0}(t,t-s)F^i_0(t-s,x)v(t-s)ds,~~~\forall t\in\mathbb{R},~v\in C_\omega(\mathbb{R},\mathbb{R}^2),$$
and
$$[\widetilde{L}^i_{\infty}v](t):=\int^{\infty}_{0}\widetilde{\Gamma}^i_{\infty}(t,t-s)\widetilde{F}^i_{\infty}(t-s)v(t-s)ds,~~~\forall t\in\mathbb{R},~v\in C_\omega(\mathbb{R},\mathbb{R}^2).$$

Then we define $R_i(x,0):=r(L^i_{x,0}),~\forall x\in\bar{\Omega},~i=1,2$ and $\widetilde{R}_i(\infty):=r(\widetilde{L}^i_{\infty}),~i=1,2$. By \cite[Theorem 4.1]{Zhang and Zhao 2021} with $\kappa={\rm diag}(D_h,D_v),~\chi=D_v$ and $\chi_0=0$, and $\kappa={\rm diag}(D_h,D_v),~\chi=\frac{1}{D_v}$ and $\chi_0=0$, respectively, it follows that
$$\lim_{\max(D_h,D_v)\rightarrow 0}\mathcal{R}_i(D_h,D_v)=\max_{x\in\bar{\Omega}}R_i(x,0),$$
$$\lim_{\min(D_h,D_v)\rightarrow \infty}\mathcal{R}_i(D_h,D_v)=\widetilde{R}_i(\infty),~~i=1,2.$$
Therefore,
$$\lim_{\max(D_h,D_v)\rightarrow 0}\mathcal{R}_0(D_h,D_v)=\max_{x\in\bar{\Omega}}\{R_1(x,0),R_2(x,0)\},$$
$$\lim_{\min(D_h,D_v)\rightarrow \infty}\mathcal{R}_0(D_h,D_v)=\max\{\widetilde{R}_1(\infty),\widetilde{R}_2(\infty)\}.$$

\section{Numerical simulations}
To verify these analytic results and examine the effects of seasonality and vector-bias on the malaria transmission, we perform illustrative numerical investigations.

\subsection{Competitive exclusion and coexistence}

We choose the period of our model to be $T=12 ~\mbox{months}$ and concentrate on one dimensional domain $\Omega=[0,\pi]$. For illustrative
purpose, we only let $\beta(t,x)$ be the time-dependent parameters, given
by
\begin{align*}
\beta(t,x)=& 4\times (5.1492-1.83692\cos(0.523599t)-0.175817\cos(1.0472t)\\
&-0.166233\cos(1.5708t)-0.16485\cos(2.0944t)-0.17681\cos(2.61799t)\\
&-1.37079\sin(0.523599t)+0.296267\sin(1.0472t)+0.2134\sin(1.5708t)\\
&-0.295228\sin(2.0944t)-0.201712\sin(2.61799t))~\mbox{month}^{-1},
\end{align*}
which is adapted from \cite{Lou and Zhao 2010}. Unless stated otherwise,
the baseline parameters are seen in Table \ref{parameters}.
We use the numerical scheme proposed in \cite[Lemma 2.5 and Remark 3.2]{Liang et al. 2019} to compute the reproduction number of each strain. In order to demonstrate the outcomes of competitive exclusion and coexistence, we consider the following three cases.
\begin{table}[!ht]
\caption{Parameter values.}\label{parameters}
    \centering
\begin{tabular}{llll}
  \toprule
  Parameter & Value~(range) & Dimension & Reference\\
  \midrule
  % after \\: \hline or \cline{col1-col2} \cline{col3-col4} ...
  $N(x)$  & 110 & dimensionless & \cite{Bai et al. 2018} \\
  $M^*(t,x)$ & 220 & dimensionless & \cite{Shi and Zhao 2021}\\
  $d$ & $1/(72\times 12)$ & month$^{-1}$ & \cite{Bai et al. 2018}\\
  $\eta(t,x)$ & 0.8 & month$^{-1}$ & \cite{Shi and Zhao 2021}\\
  $D_h$ & 0.4 & km$^2\cdot$ month$^{-1}$ & \cite{Bai et al. 2018}\\
  $D_v$ & 0.02 & km$^2\cdot$ month$^{-1}$ & \cite{Bai et al. 2018} \\
  $p$ & 0.8~(0,1) & dimensionless & \cite{Bai et al. 2018} \\
  $l$ & 0.2~(0,1) & dimensionless & \cite{Bai et al. 2018} \\
  \bottomrule
\end{tabular}\label{paraTab}
  \end{table}

{\it Case 1.} $\mathcal{R}_1>1, \mathcal{R}_2>1, \hat{\mathcal{R}}_1>1$ and $\hat{\mathcal{R}}_2>1$. We choose  $\gamma_1= 0.096~\mbox{month}^{-1}, \gamma_2=0.082~\mbox{month}^{-1}, \alpha_1=0.56, \alpha_2=0.6, c_1=0.25, c_2=0.2$. Then we obtain
$\mathcal{R}_1=11.1267$, $\mathcal{R}_2=10.3022$, $\hat{\mathcal{R}}_1=2.2919$, and $\hat{\mathcal{R}}_2=2.2605$. Fig. \ref{long-term-1} shows that
the disease is uniformly persistent, and periodic oscillation phenomenon occurs, which is consistent with Theorem \ref{uniformly persistent}.
\begin{figure}[!ht]
\centering
\subfigure[]{
\includegraphics[height=5.8cm,width=7.2cm,angle=0]{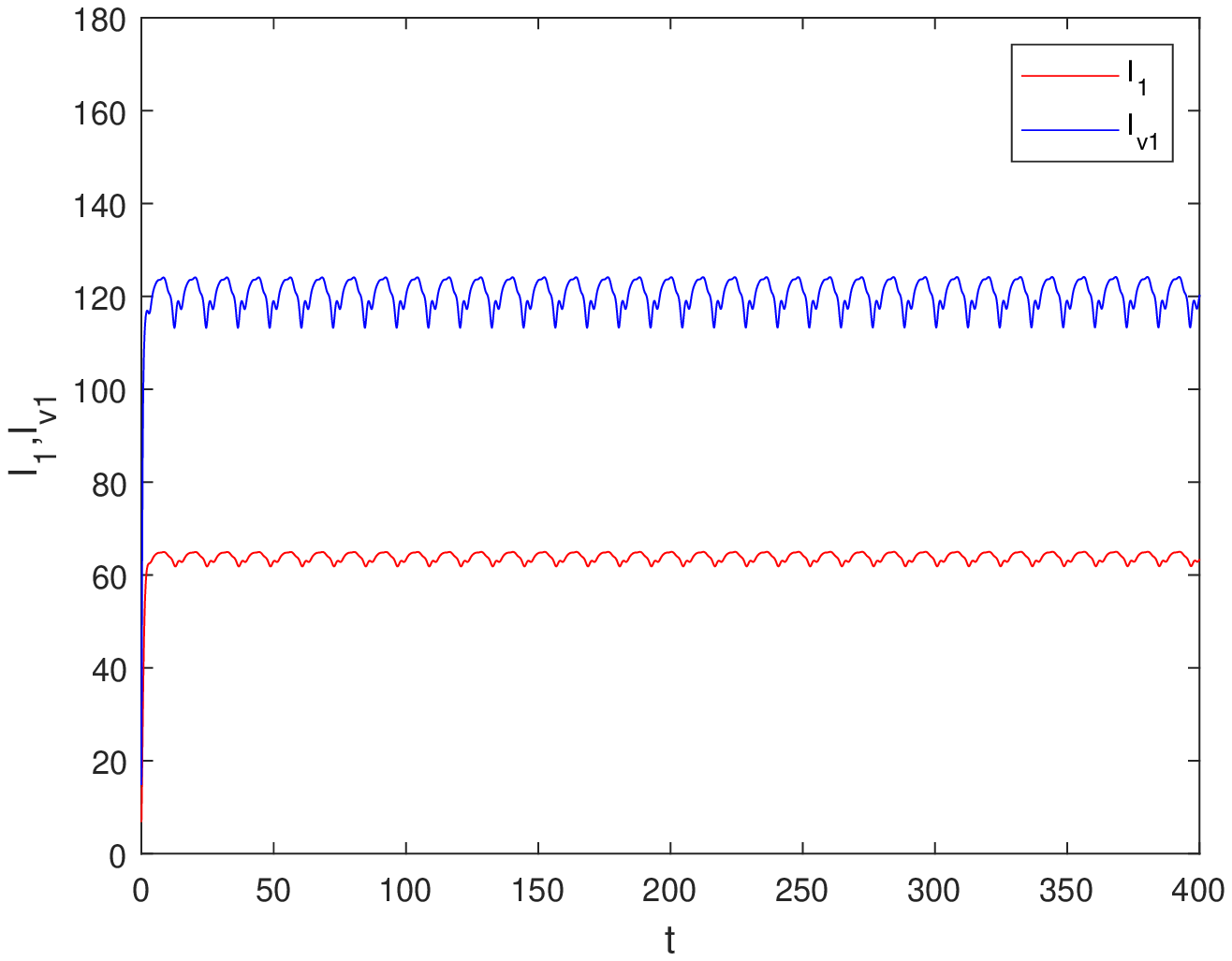}}
\subfigure[]{
\includegraphics[height=5.8cm,width=7.2cm,angle=0]{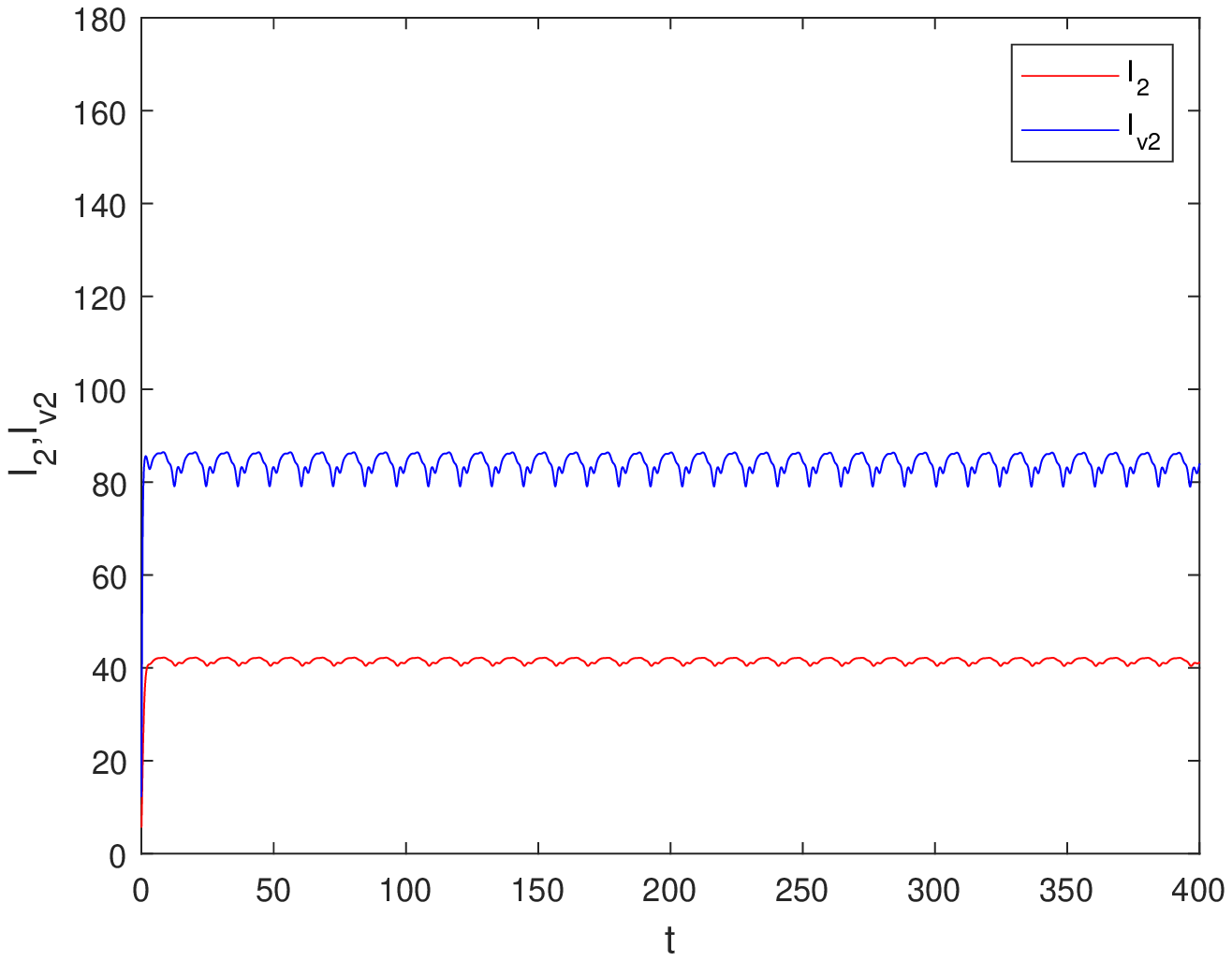}}
\caption{Two strains coexist: (a) the sensitive strains $I_1,I_{v1}$, (b) The resistant strains  $I_2,I_{v2}$. The initial data are chosen as $I_1(0,x)=6(1+\cos(2x)),~I_{v1}(0,x)=10(1+\cos(2x)),~I_2(0,x)=5(1+\cos(2x)),~I_{v2}(0,x)=8(1+\cos(2x)), \forall~x\in[0,\pi]$. }\label{long-term-1}
\end{figure}

{\it Case 2.} $\mathcal{R}_1>1, \mathcal{R}_2>1, \hat{\mathcal{R}}_1>1$ and $\hat{\mathcal{R}}_2<1$.
We choose $\gamma_1=0.083~\mbox{month}^{-1}, \gamma_2=0.082~\mbox{month}^{-1}, \alpha_1=0.35, \alpha_2=0.55, c_1=0.2, c_2=0.1.$
Then we have $\mathcal{R}_1=7.8683$, $\mathcal{R}_2=6.9746$, $\hat{\mathcal{R}}_1=1.0934$, $\hat{\mathcal{R}}_2=0.4729$. Fig. \ref{long-term-2} shows that the sensitive strains are persistent, but the resistant strains die out.
\begin{figure}[!ht]
\centering
\subfigure[]{
\includegraphics[height=5.8cm,width=7.2cm,angle=0]{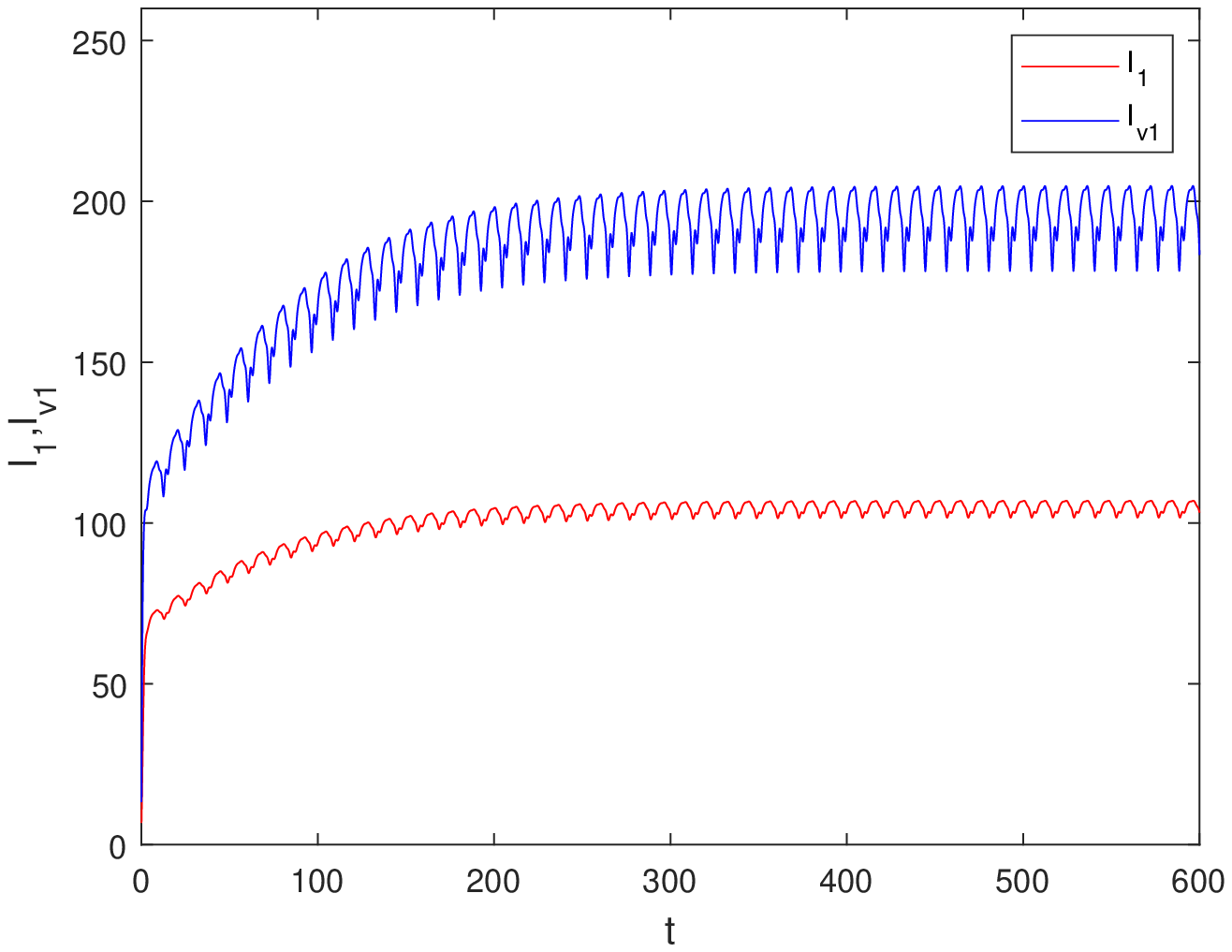}}
\subfigure[]{
\includegraphics[height=5.8cm,width=7.2cm,angle=0]{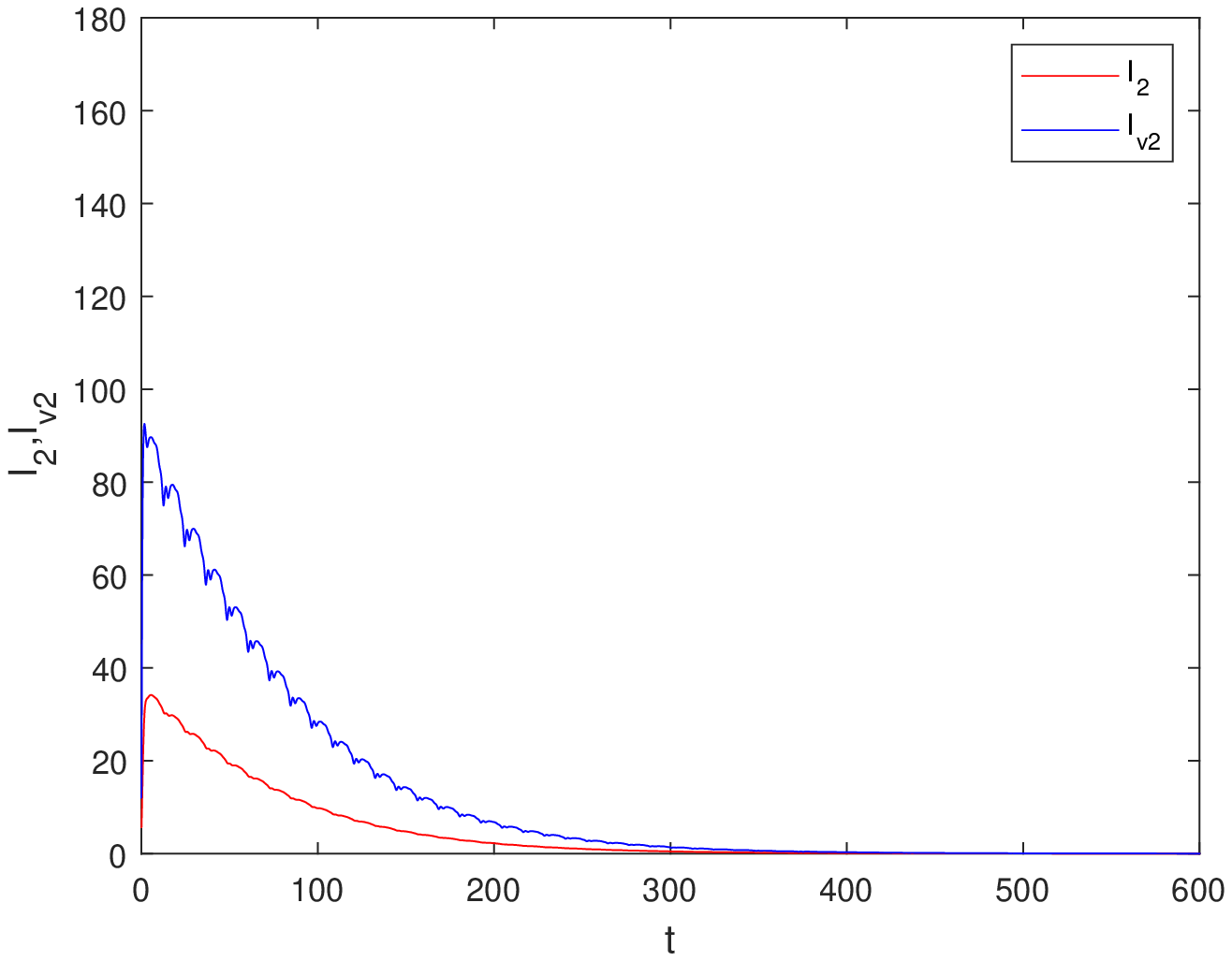}}
\caption{(a) $I_1$ and $I_{v1}$ persist; (b) $I_2$ and $I_{v2}$ die out. The initial data are the same as Fig. \ref{long-term-1}.  }\label{long-term-2}
\end{figure}

{\it Case 3.} $\mathcal{R}_1>1, \mathcal{R}_2>1, \hat{\mathcal{R}}_1<1$ and $\hat{\mathcal{R}}_2>1$.
We choose $\gamma_1=0.096~\mbox{month}^{-1}, \gamma_2=0.082~\mbox{month}^{-1}, \alpha_1=0.55, \alpha_2=0.45, c_1=0.15, c_2=0.2.$
Then we get $\mathcal{R}_1=8.5413$, $\mathcal{R}_2=8.9219$, $\hat{\mathcal{R}}_1=0.7052$, $\hat{\mathcal{R}}_2=1.9601$. Fig. \ref{long-term-3} depicts that the resistant strains persist, but the sensitive strains go extinct.

It should be pointed out that in Figs. \ref{long-term-1}-\ref{long-term-3}, we only plot the graph of $x$-intersection with $x=0.7448$. In addition, for the second and third case, the competitive exclusion phenomena are also observed even though $\mathcal{R}_0>1$.
It is a pity that we now can not prove it, which is left for future
consideration.

\begin{figure}[!ht]
\centering
\subfigure[]{
\includegraphics[height=5.8cm,width=7.2cm,angle=0]{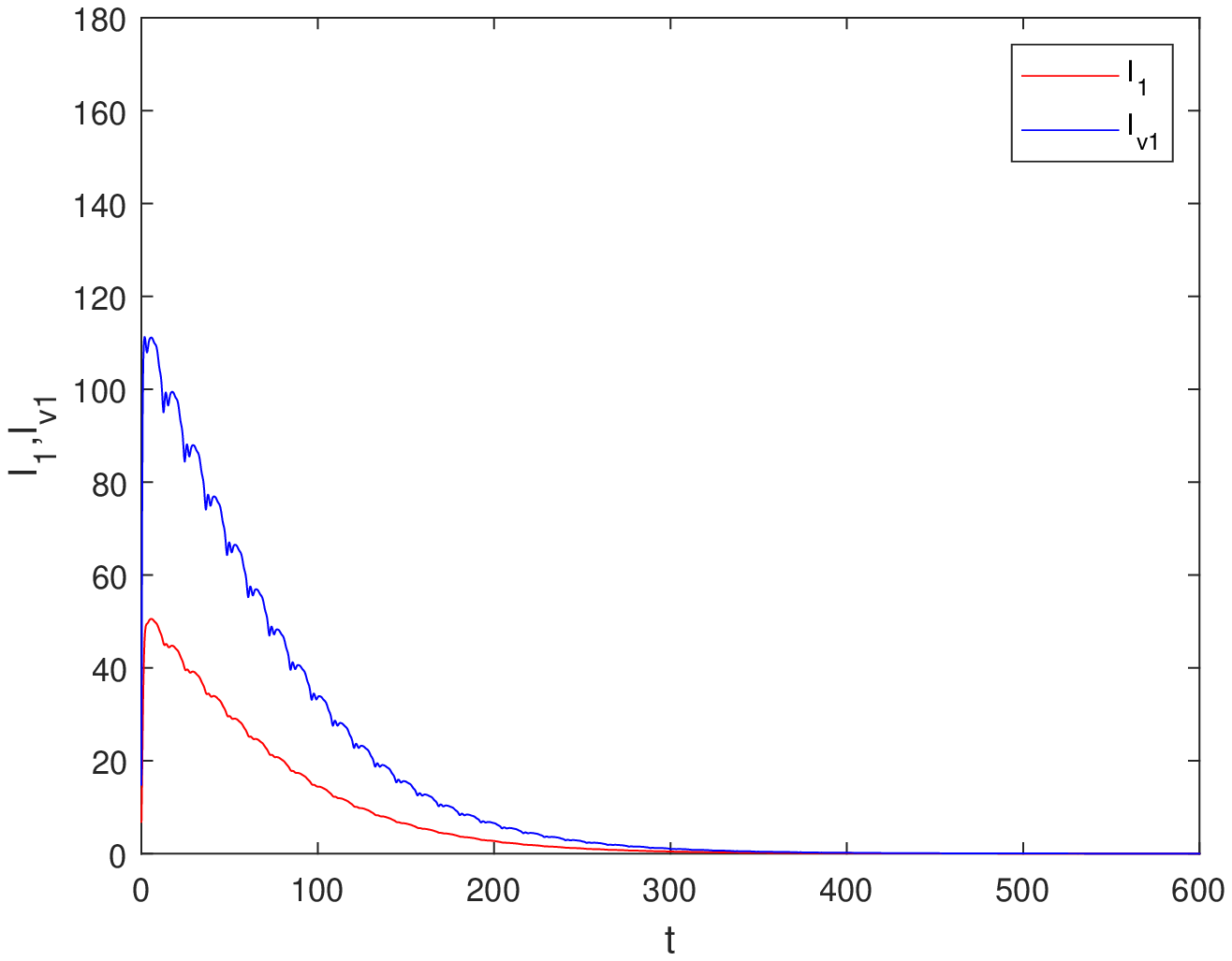}}
\subfigure[]{
\includegraphics[height=5.8cm,width=7.2cm,angle=0]{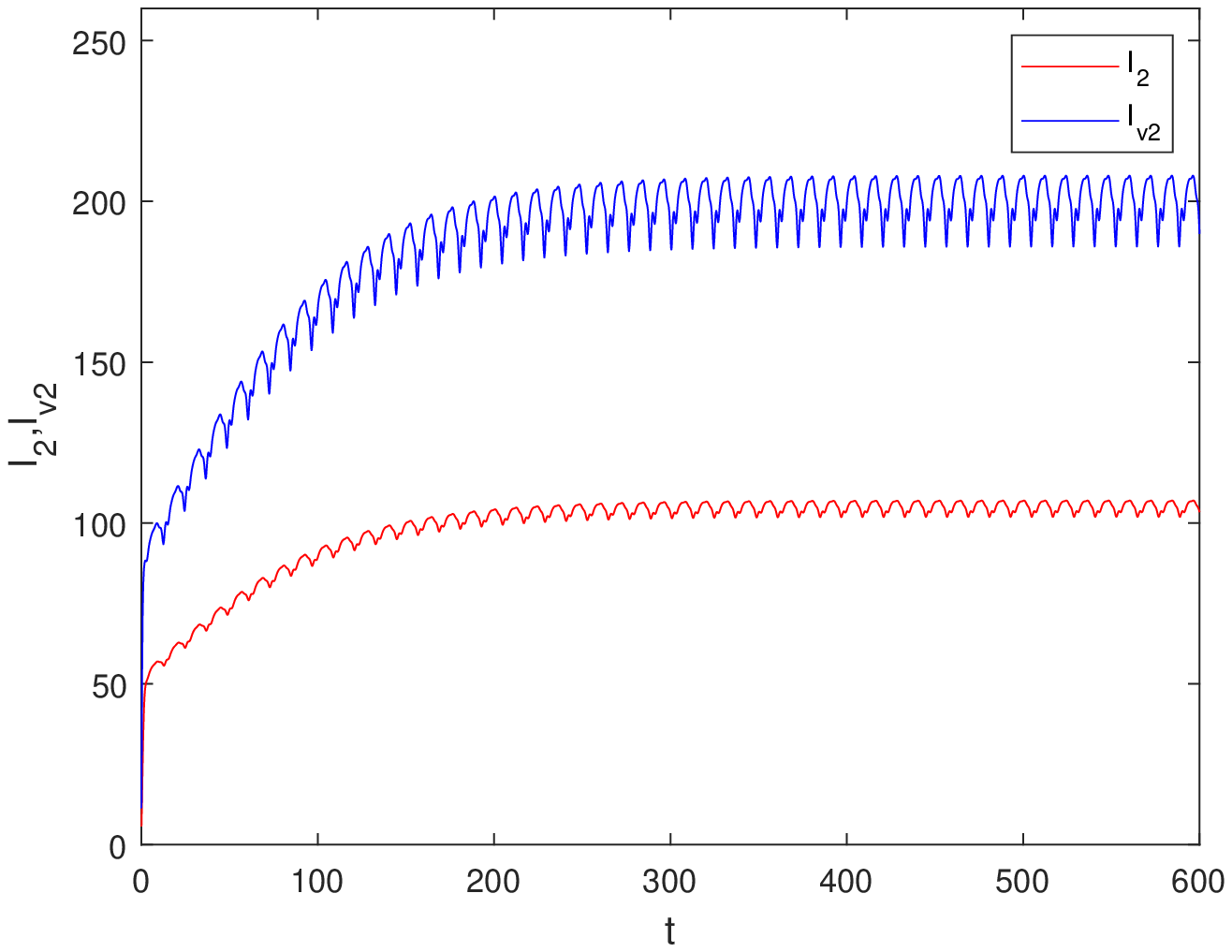}}
\caption{(a) $I_1$ and $I_{v1}$ die out; (b) $I_2$ and $I_{v2}$ persist. The initial data are the same as Fig. \ref{long-term-1}. }\label{long-term-3}
\end{figure}

\subsection{Effects of parameters on $\mathcal{R}_0$}

In order to explore the effect of seasonality, we set the biting rate $\beta(t)\approx a_0(1-b_0\cos(0.523599t))$,  where $a_0$ is the average biting rate, and $b_0\in [0,1]$ is the strength of seasonal forcing. We use the same parameter values as in Case 1 in Section 6.1.
Fig. \ref{seasonality} describes the dependence of $\mathcal{R}_0$ on $a_0$ and $b_0$.
The More precisely,
Fig. \ref{seasonality}(a) shows that $\mathcal{R}_0$ is an increasing function of $a_0$ for fixed $b_0$. Fig. \ref{seasonality}(b) compares the influences of the time-dependent biting rate and the time-averaged biting rate on $\mathcal{R}_0$. As can be seen in Fig. \ref{seasonality}(b), $\mathcal{R}_0$ increases as $b_0$ increases. This implies that the use of the time-averaged biting rate may underestimate the risk of disease transmission. It should be emphasized that this phenomenon is not observed in all malaria models, which is dependent on model parameters.
\begin{figure}[!ht]
\centering
\subfigure[]{
\includegraphics[height=5.8cm,width=7.2cm,angle=0]{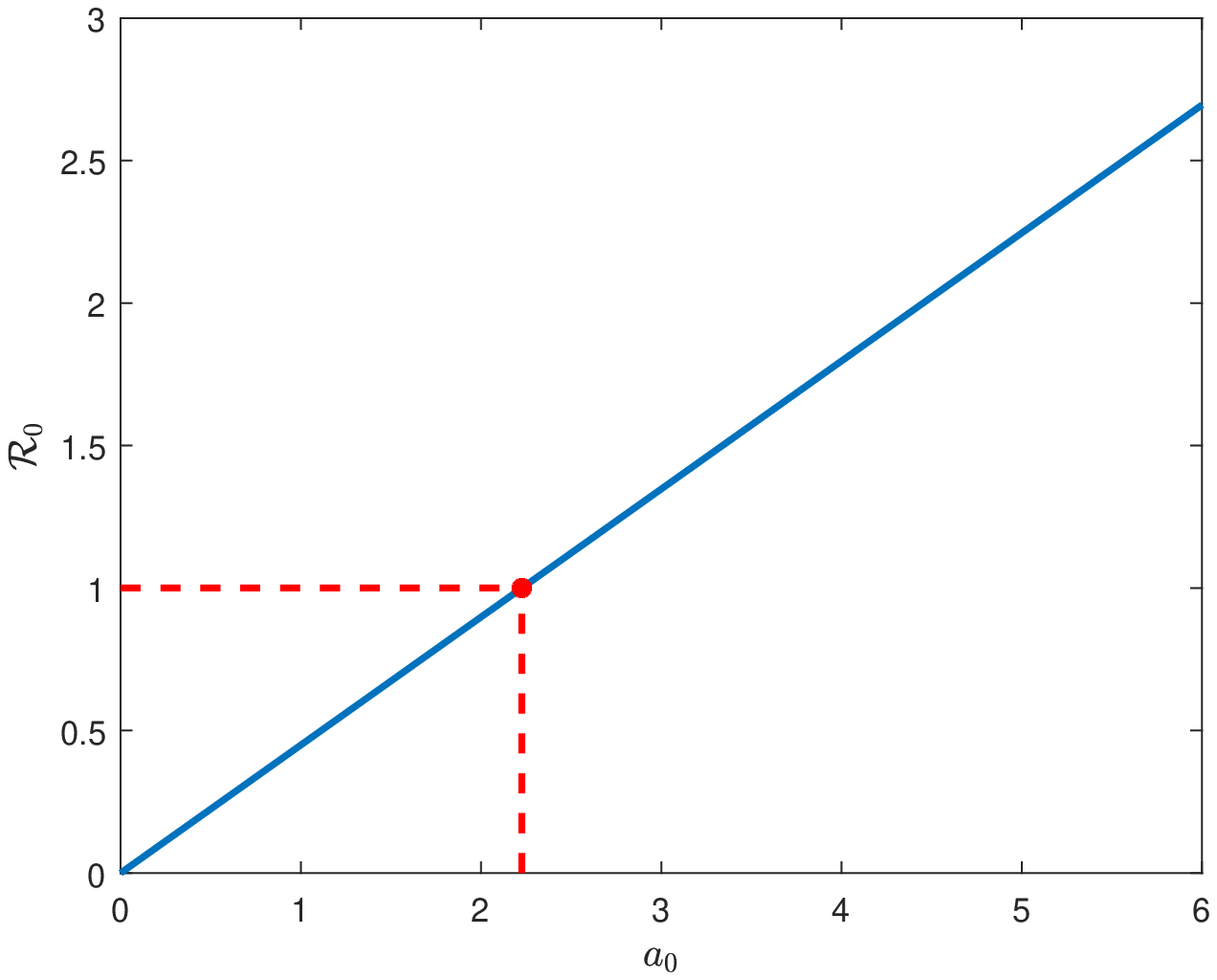}}
\subfigure[]{
\includegraphics[height=5.8cm,width=7.2cm,angle=0]{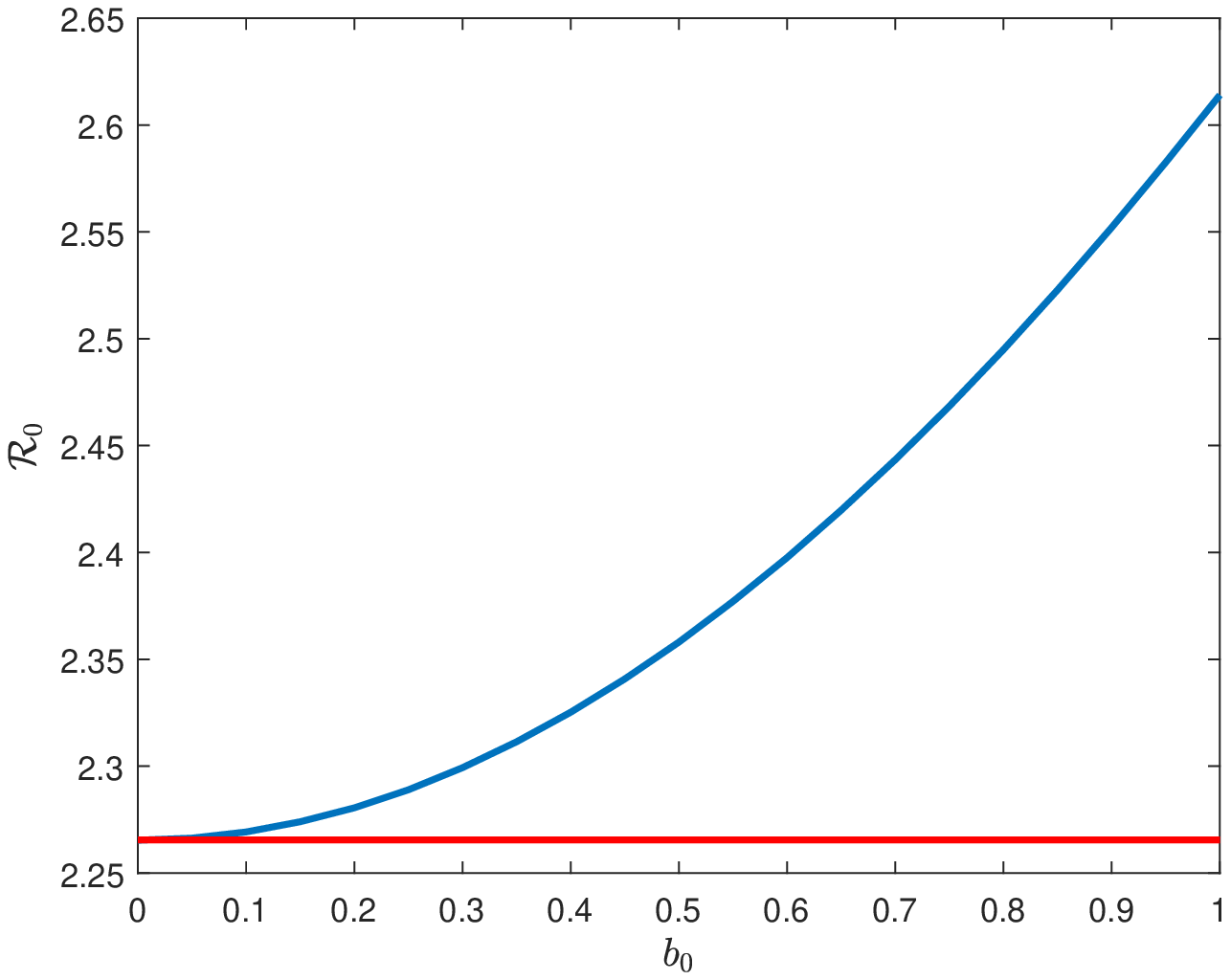}}
\caption{The effect of seasonality on $\mathcal{R}_0$. (a) $\mathcal{R}_0$ as a function of $a_0$ when $b_0=0.35674$; (b) $\mathcal{R}_0$ as a function of $b_0$ when $a_0=5.1492$. }\label{seasonality}
\end{figure}

Next, we investigate the vector-bias effect. We use $q:=l/p$ to measure the relative attractivity of susceptible host versus infection one. Our numerical result in Fig. \ref{q} shows that $\mathcal{R}_0$ decreases as $q$ increases, which indicates that the ignorance of the vector-bias effect will underestimate the value of $\mathcal{R}_0$.
\begin{figure}[!ht]
\centering
\subfigure{
\includegraphics[height=5.8cm,width=7.2cm,angle=0]{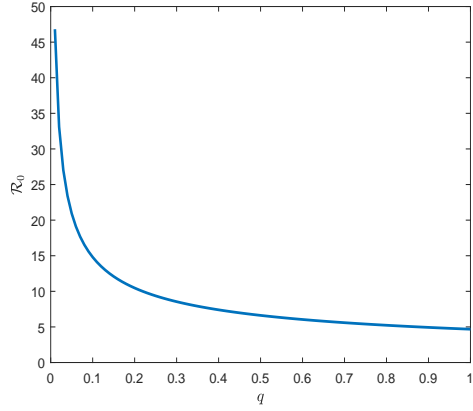}}
\caption{The effect of vector-bias on $\mathcal{R}_0$.  }\label{q}
\end{figure}
In fact, we can analytically prove the monotonicity of $\mathcal{R}_0$ with respect to $q$.
Let $A^i$ and $B^i~(i=1,2)$ be two bounded linear operators on $C_{\omega}(\mathbb{R},\mathbb{E})$ given by
$$[A^iv](t):=\int^{\infty}_{0}\Psi_i(t,t-s)v(t-s)ds,~[B^iv](t):=\mathcal {F}_i(t)v,~\forall t\in\mathbb{R},v\in C_{\omega}(\mathbb{R},\mathbb{E}),$$
where $\Psi_i$ and $\mathcal {F}_i(t)$ are defined as in Section 3. Inspired by Section 4.2 in \cite{Liang et al. 2017}, we write
$$A^iv=(A^i_1v_1, A^i_2v_2),~{\rm and}~B^iv=(B^i_1v_2, B^i_2v_1),~~~\forall v=(v_1,v_2)\in C_{\omega}(\mathbb{R},\mathbb{E}),$$
where
\begin{align*}
&[A^i_1v_1](t)=\int^{\infty}_{0}T_i(t,t-s)v_1(t-s)ds,~~[A^i_2v_2](t)=\int^{\infty}_{0}T_3(t,t-s)v_2(t-s)ds,\\
& [B^i_1v_2](t)=c_i\beta(t,\cdot)v_2(\cdot),~~~[B^i_2v_1](t)=\frac{\alpha_i\beta(t,\cdot)pM^*(\cdot)}{lN(\cdot)}v_1(\cdot),~~~i=1,2.
\end{align*}
According to Section 3.1, $\mathcal{L}_i(q)=A^iB^iv=(A^i_1B^i_1v_2, A^i_2B^i_2v_1)$, it then follows that
$$\mathcal{L}^2_i(q)v=(A^i_1B^i_1A^i_2B^i_2v_1, A^i_2B^i_2A^i_1B^i_1v_2)=\frac{1}{q}\mathcal{L}^2_i(1)v,~~~i=1,2,$$
and hence, $\mathcal{L}^2_i(q)=\frac{1}{q}\mathcal{L}^2_i(1)$. In view of $r^2(\mathcal{L}_i(q))=r(\mathcal{L}^2_i(q))$, we obtain
$$\mathcal{R}_i(q):=r(\mathcal{L}_i(q))=\frac{1}{\sqrt{q}}r(\mathcal{L}_i(1))=\frac{1}{\sqrt{q}}\mathcal{R}_i(1),~~~i=1,2.$$
Therefore, $\mathcal{R}_0(q)=\max\{\mathcal{R}_1(q),\mathcal{R}_2(q)\}=\frac{1}{\sqrt{q}}\max\{\mathcal{R}_1(1),\mathcal{R}_2(1)\}$.
This supports our numerical finding.

\section{Discussion}

In this paper, we have proposed a two-strain malaria model with seasonality and vector-bias. It is of interest to note that our model is a competitive system for sensitive and resistent strains, but the corresponding subsystem of each strain is cooperative. To characterize this mathematical structure, we define a time-dependent region $X(t)$.  Although the introduction of time-varying region brings out some mathematical difficulties, the solution map $Q(t): X(0)\rightarrow X(t)$ is an $\omega$-periodic semiflow. This nice property makes us use uniform persistence theory for model dynamics. Our results show that the zero solution is global attractiveness if $\mathcal {R}_0=\max\{\mathcal {R}_1, \mathcal {R}_2\}<1$ (see Theorem \ref{global attractive}); sensitive (resistent) strains are uniformly persistent if $\mathcal {R}_1>1>\mathcal {R}_2~(\mathcal {R}_2>1>\mathcal {R}_1)$ (see Theorem \ref{Competitive exclusion}); and  the model is uniformly persistent and admits a
positive periodic solution if $\mathcal {R}_1>1, \mathcal {R}_2>1$, $\hat{\mathcal {R}}_1>1$ and $\hat{\mathcal {R}}_2>1$ (see Theorem \ref{uniformly persistent}). We also have
analyzed the asymptotic behavior of the basic reproduction number with small and large diffusion coefficients. Numerically, we have demonstrated the long-time behaviors of solutions: competitive exclusion and coexistence, and revealed the influences of some key parameters on the basic reproduction number. It is found that $\mathcal{R}_0$ increases as
 the strength of seasonal forcing increases, but it is
a decreasing function of the relative attractivity of susceptible host versus infection one.

Finally, we mention that under certain condition, system \eqref{model} is a monotone system with respect to the partial order $\leq_K$, which is induced by the cone $K=\mathbb{E}^+\times(-\mathbb{E}^+)$.
Hence, if we can prove the uniqueness of positive periodic solution in Theorem \ref{uniformly persistent}, then
the positive periodic solution is globally attractive in $X(0)\setminus \{0\}$ by the virtue of
the theory of monotone systems. This is a challenging problem and left for future study.


\begin{thebibliography}{00}

\bibitem{JuanB2015} J. B. Gutierrez, M. R. Galinski, S. Cantrell, et al.,
From within host dynamics to the epidemiology of infectious disease:
Scientific overview and challenges, Math. Biosci. 270 (2015) 143--155.

\bibitem{Forouzannia and Gumel 2014} F. Forouzannia, A. B. Gumel, Mathematical analysis of an age-structured model for malaria transmission dynamics, Math. Biosci. 247 (2014) 80--94.

\bibitem{WHO} The World Health Report 2020. Website: https://www.who.int/news/item/30-11-2020-who-calls-for-reinvigorated-action-to-fight-malaria.

\bibitem{Ross 1911} R. Ross, The prevention of malaria, 2nd edition, Murray, London, 1911.

\bibitem{Macdonald1957} G. Macdonald, The epidemiology and control of malaria, Oxford University Press, London, 1957.

\bibitem{Cosner09} C. Cosner, J. C. Beier, R. S. Cantrell, et al., The effects of human movement on the
persistence of vector-borne diseases, J. Theor. Biol. 258 (2009)
550--560.

\bibitem{Chamchod and Britton 2011} F. Chamchod, N. F. Britton, Analysis of a vector-bias model on malaria transmission, Bull. Math. Biol. 73 (2011) 639--657.

\bibitem{Lou and Zhao 2010} Y. Lou, X.-Q. Zhao, A climate-based malaria transmission model with structured vector population, SIAM J. Appl. Math. 70 (2010) 2023--2044.

\bibitem{LouY2011} Y. Lou, X.-Q. Zhao, A reaction-diffusion malaria model with
incubation period in the vector population, J. Math. Biol. 62 (2011)
543--568.

\bibitem{Xiao and Zou 2014} Y. Xiao, X. Zou, Transmission dynamics for vector-borne diseases in a patchy environment. J. Math. Biol. 69 (2014) 113--146.

\bibitem{Wang and Zhao 2017} X. Wang, X.-Q. Zhao, A periodic vector-bias malaria model with incubation period, SIAM J. Appl. Math. 77 (2017) 181--201.

\bibitem{Bai et al. 2018} Z. Bai, R. Peng, X.-Q. Zhao, A reaction-diffusion malaria model with seasonality and incubation period, J. Math. Biol. 77 (2018) 201--228.

\bibitem{WuZhao2019} R. Wu, X.-Q. Zhao, A reaction-diffusion model of vector-borne disease with periodic delays, J. Nonlinear Sci. 29 (2019) 29--64.

\bibitem{Wangbg2020} B.-G. Wang, L. Qiang, Z.-C. Wang, An almost periodic Ross-Macdonald model with structured vector population in a patchy environment, J. Math. Biol. 80 (2020) 835--863.

\bibitem{Kingsolver 1987} J. G. Kingsolver, Mosquito host choice and the epidemiology of malaria, Am. Nat. 130 (1987) 811--827.

\bibitem{Hosack2008}  G. R. Hosack, P. A. Rossignol, P. van den Driessche, The control of vector-borne disease epidemics, J. Theor. Biol. 255 (2008) 16--25.

\bibitem{Aneke 2002} S. J. Aneke, Mathematical modelling of drug resistant malaria parasites and vector populations, Math. Methods Appl. Sci. 25 (2002) 335--346.

\bibitem{Klein 2013} E. Y. Klein, Antimalarial drug resistance: a review of the biology and strategies to delay emergence and spread, Int. J. Antimicrob. Ag. 41 (2013) 311--317.

 \bibitem{Agusto et al. 2015} F. B. Agusto, A. B. Gumel, P. E. Parham, Qualitative assessment of the role of temperature variations on malaria transmission dynamics, J. Biol. Syst. 23 (2015) 1--34.

 \bibitem{Cailly 2012} P. Cailly, A. Tran, T. Balenghiene, et al., A climate-driven abundance model to assess mosquito control strategies, Ecol. Model. 227 (2012) 7--17.

\bibitem{Hoshen and Morse 2004} M. B. Hoshen, A. P. Morse, A weather-driven model of malaria transmission, Malaria J. 3 (2004), article 32.

\bibitem{Ewing et al. 2016} D. A. Ewing, C. A. Cobbold, B. V. Purse, et al., Modelling the effect of temperature on the seasonal population dynamics of temperate mosquitoes, J. Theor. Biol. 400 (2016) 65--79.

\bibitem{Altizer et al. 2010} S. Altizer, A. Dobson, P. Hosseini, et al., Seasonality and the dynamics of infectious diseases, Ecol. Lett. 9 (2006) 467--484.

\bibitem{Shi and Zhao 2021} Y. Shi, H. Zhao, Analysis of a two-strain malaria transmission model with spatial heterogeneity and vector-bias, J. Math. Biol. 82 (2021) 24.

\bibitem{Li and Zhao 2021} F. Li, X.-Q. Zhao, Global dynamics of a reaction-diffusion model of Zika virus transmission with seasonality, Bull. Math. Biol. 83 (2021) 43.

\bibitem{Zhao 2017b} X.-Q. Zhao, Dynamical systems in population biology, 2nd edition, Springer, New York, 2017.

\bibitem{Zhang et al. 2015} L. Zhang, Z.-C. Wang, X.-Q. Zhao, Threshold dynamics of a time periodic reaction-diffusion epidemic model with latent period, J. Differential Equations 258 (2015) 3011--3036.

\bibitem{Daners and Koch 1992} D. Daners, P. K. Medina, Abstract Evolution Equations, Periodic Problems and Applications, Pitman Research Notes in Mathematics Series, vol. 279, Longman Scientific and Technical, Harlow, UK, 1992.

\bibitem{Martin and Smith 1990} R. H. Martin, H. L. Smith, Abstract functional differential equations and reaction-diffusion systems, Trans. Amer. Math. Soc. 321 (1990) 1--44.

\bibitem{Magal and Zhao 2005} P. Magal, X.-Q. Zhao, Global attractors and steady states for uniformly persistent dynamical systems, SIAM J. Math. Anal. 37 (2005) 251--275.

\bibitem{P.Hess} P. Hess, Periodic-Parabolic Boundary Value Problems and Positivity, Pitman Research Notes in Mathematics Series, vol. 247, Longman Scientific and Technical, Harlow, UK, 1991.

\bibitem{Thieme 2009} H. R. Thieme, Spectral bound and reproduction number for infinite-dimensional population structure and time heterogeneity, SIAM J. Appl. Math. 70 (2009) 188--211.

\bibitem{Zhao 2017a} X.-Q. Zhao, Basic reproduction ratios for periodic compartmental models with time delay, J. Dynam. Differential Equations 29 (2017) 67--82.

\bibitem{Liang et al. 2019} X. Liang, L. Zhang, X.-Q. Zhao, Basic reproduction ratios for periodic abstract functional differential equations (with application to a spatial model for Lyme disease), J. Dynam. Differential Equations 31 (2019) 1247--1278.

\bibitem{Bacaer and Guernaoui 2006} N. Baca\"{e}r, S. Guernaoui, The epidemic threshold of vector-borne diseases with seasonality, J. Math. Biol. 53 (2006) 421--436.

\bibitem{Tuncer and Martcheva 2012}  N. Tuncer, M. Martcheva, Analytical and numerical approaches to coexistence of strains in a two-strain SIS model with diffusion, J. Biol. Dyn. 6 (2012) 406--439.

\bibitem{Liang et al. 2017} X. Liang, L. Zhang, X.-Q. Zhao, The principal eigenvalue for degenerate periodic reaction-diffusion systems, SIAM J. Math. Anal. 49 (2017) 3603--3636.

\bibitem{Zhang and Zhao 2021} L. Zhang, X.-Q. Zhao, Asymptotic behavior of the basic reproduction ratio for periodic reaction-diffusion systems, SIAM J. Math. Anal. 53 (2021) 6873--6909.

 \end{thebibliography}
\end{document}